\documentclass[12pt, reqno]{amsart}
\usepackage{amssymb}
\usepackage{graphicx}
\usepackage{hyperref}
\hypersetup{colorlinks=true,linkcolor=black,anchorcolor=black,citecolor=black}

\usepackage{appendix}
\usepackage{xcolor}
\usepackage{longtable}
\usepackage{float}
\usepackage{tikz}
\usepackage[all]{xy}
\DeclareFontFamily{U}{mathb}{\hyphenchar\font45}
\DeclareFontShape{U}{mathb}{m}{n}{
      <5> <6> <7> <8> <9> <10> gen * mathb
      <10.95> mathb10 <12> <14.4> <17.28> <20.74> <24.88> mathb12
      }{}
\DeclareSymbolFont{mathb}{U}{mathb}{m}{n}
\DeclareMathSymbol{\righttoleftarrow}{3}{mathb}{"FD}

\theoremstyle{plain}
\newtheorem{prop}{Proposition}[section]
\newtheorem{theo}[prop]{Theorem}
\newtheorem{coro}[prop]{Corollary}
\newtheorem{lemm}[prop]{Lemma}
\theoremstyle{remark}

\theoremstyle{definition}
\newtheorem{rema}[prop]{Remark}

\newtheorem{exam}[prop]{Example}
\numberwithin{equation}{section}

\def\cT{{\mathcal T}}

\def\disc{{\mathrm{disc}}}
\newcommand{\longdashrightarrow}{%
  \mathrel{\tikz[baseline=-0.55ex]
  \draw[dashed,->] (0,0)--(1.2,0);}}

\def\sD{{\mathsf D}}

\def\fC{{\mathfrak C}}
\def\fD{{\mathfrak D}}

\def\fS{{\mathfrak S}}

\def\fS{{\mathfrak S}}

\def\bA{{\mathbb A}}

\def\bP{{\mathbb P}}
\def\bQ{{\mathbb Q}}

\def\bZ{{\mathbb Z}}
\def\bR{{\mathbb R}}

\def\bC{{\mathbb C}}

\def\rH{{\mathrm H}}

\def\Pic{\mathrm{Pic}}

\def\Gal{\mathrm{Gal}}

\def\Gr{\mathrm{Gr}}
\def\Cox{\mathrm{Cox}}

\def\Hom{\mathrm{Hom}}

\def\lim{\mathrm{lim}}

\def\Spec{\mathrm{Spec}}

\title[Universal torsor and stable rationality]{Universal torsors over quartic del Pezzo surfaces and stable rationality}

\author[Yuri Tschinkel]{Yuri Tschinkel}
\address{
  Courant Institute,
  251 Mercer Street,
  New York, NY 10012, USA}
\address{Simons Foundation\\
160 Fifth Avenue\\
New York, NY 10010\\
USA}
\email{tschinkel@cims.nyu.edu}

\author[Zhijia Zhang]{Zhijia Zhang}

\address{
Department of Mathematics, Columbia University, New York, NY 10027, USA
}

\email{zz3531@columbia.edu}

\date{\today}

\makeatletter
\@namedef{subjclassname@2020}{\textup{2020} Mathematics Subject Classification}
\makeatother

\subjclass[2020]{14E08;	14E05, 14M20}

\begin{document}

\begin{abstract}
Let $S$ be a smooth quartic del Pezzo surface over a field $k$, of characteristic zero. We prove that a universal torsor $\mathcal T$ over $S$ is $k$-rational, provided $\mathcal T$ has $k$-points.  
As an application, we obtain examples of stably rational smooth cubic hypersurfaces over $\mathbb Q$ in every dimension greater than 2.
\end{abstract}
	
	\maketitle

    \section{Introduction}
	\label{sect:intro}

Let $S$ be a del Pezzo surface over a field $k$ with a nonempty set 
$S(k)$ of $k$-rational points. 
Let 
$$
\cT\to S
$$
be a universal torsor over $S$, under the N\'eron-Severi torus of $S$; see Section~\ref{sect:back} for details on the terminology. 
Our first result is a proof of a conjecture of Colliot-Th\'el\`ene--Sansuc \cite[Section 2.8]{CTSansucDuke}, for quartic del Pezzo surfaces: 

\begin{theo}[Theorem~\ref{thm:mainrepeat}]
\label{thm:main}
Let $k$ be a field of characteristic zero and $S$ a quartic del Pezzo surface over $k$. Let $\cT\to S$ be a universal torsor over $S$ with $\cT(k)\neq \varnothing$. Then $\cT$ is $k$-rational.
\end{theo}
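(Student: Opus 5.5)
The plan is to realize $\cT$, up to a birational equivalence that changes it only by a product with a rational variety, as a concrete explicit variety. Then I would exploit the structure of quartic del Pezzo surfaces as intersections of two quadrics $Q_1=Q_2=0$ in $\bP^4$. The main tool is the Colliot-Th\'el\`ene--Sansuc description of universal torsors over $S$ via the pencil of quadrics. Over $\bar k$, the Cox ring of $S$ is generated by the $16$ sections of the $(-1)$-curves, modulo $20$ quadratic relations. Descending this, $\cT$ embeds as an open subset of an intersection of quadrics in the $k$-form of $\bA^{16}$ given by the étale algebra $E=k[\text{lines}]$. It is then natural to use the $\mathbb{G}_m$-reduction of Colliot-Th\'el\`ene--Sansuc/Skorobogatov. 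The pencil $\lambda Q_1-\mu Q_2$ has a degree-$5$ étale algebra $L=k[\theta]$ of singular members. Up to rational factors, $\cT$ is birational to the variety
$$
\{\,x\in \bA^5,\ t\in R_{L/k}(\mathbb{G}_m):\ \textstyle\sum_i \tfrac{x_i^2}{\ldots}\ \text{expressed as } \ell(x)=c\, N(t)\ldots\}
$$
i.e.\ to a torsor given by $c\,t^2=\ell(x)$ in $R_{L/k}\bA^1$ with $\ell$ a linear form, intersected with trace conditions. This is the Colliot-Th\'el\`ene--Skorobogatov presentation of the ``intermediate/universal'' torsors in terms of the diagonal form $\sum_\theta a_\theta x_\theta^2$ over $L$.

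First, I would choose a $k$-point $t_0\in\cT(k)$ lying over a point $s_0\in S(k)$ outside the lines; I would arrange this by density, since $\cT$ is $k$-unirational once it has a point. Projection from $s_0$ makes $S$ birational to a cubic surface, and blowing up gives a conic-bundle-type fibration. The key step I would aim for is to write $\cT$ birationally as a family over an open subset of $\bA^n$ whose generic fibre is a smooth quadric (or a torsor under a quasi-trivial torus) with a rational section. The section would be produced from the point $t_0$. Rationality then follows from two facts: a smooth quadric with a rational point is rational over the function field, and quasi-trivial tori and their torsors with points are rational. Concretely, I expect the equations $\sum_{\theta} \theta^j a_\theta y_\theta^2=0$ for $j=0,1,2$ (trace form over $L$), with $y\in R_{L/k}\bA^1$, to describe the torsor up to a quasi-trivial torus factor. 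Fibering over some of the coordinates, these equations become linear in the remaining variables once one uses the point to make a linear change of coordinates: translating by the point, the quadratic equations restricted to lines through the point become linear.

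The main obstacle is the middle step. One must show that the passage from the universal torsor to the explicit intersection of three quadrics over $L$ (or the $16$-dimensional Cox presentation) is a birational equivalence up to \emph{quasi-trivial} torus factors, not merely stable ones. Only then is actual $k$-rationality, rather than stable rationality, obtained. Equivalently, I must check that the Néron–Severi torus decomposes compatibly, as $R_{L/k}\mathbb{G}_m$-type pieces times the torus of the pencil, with trivial relevant torsors. I would try first to handle this by computing the character module of the Néron–Severi torus as a $W(D_5)$-lattice and exhibiting a permutation sublattice with permutation quotient. Failing that, I would fall back on a direct rational parametrization: the point $t_0$ lets one rationally parametrize the intersection of quadrics by lines through $t_0$. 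Each line meets each quadric in one further point, which turns the three quadric equations into linear equations in the slope variables, defining a vector bundle over a rational base.
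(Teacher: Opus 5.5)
Neither of your two routes closes the argument. The primary route asks for a permutation sublattice of $\Pic(\bar S)$ with permutation quotient, equivalently a splitting of the N\'eron--Severi torus into quasi-trivial pieces. An extension of a permutation lattice by a permutation lattice is classified by $\rH^1$ of a permutation module, which vanishes. So such a filtration would make $\Pic(\bar S)$ itself a permutation module. The theorem, however, covers every quartic del Pezzo surface whose torsor has a $k$-point, including those with $\rH^1(k,\Pic(\bar S))\neq 0$; such torsors exist whenever $S(k)\neq\varnothing$. Hence rationality of $\cT$ cannot come from decomposing $T$.

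The explicit model you write down, where it is pinned down at all, is also wrong. The three diagonal quadrics $\sum_\theta\theta^j a_\theta y_\theta^2=0$, $j=0,1,2$, on $R_{L/k}\bA^1\simeq\bA^5$ cut out the cone over a smooth complete intersection of three quadrics in $\bP^4$, which is a curve of genus $5$. That cone is not stably rational even over $\bar k$. But $\bar\cT$ is rational, being birational to $\bar S\times\bar T$, so no quasi-trivial torus factor can relate the two.

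Your fallback fails for a more basic reason. Let $p$ lie on $V=\{R_1=\dots=R_m=0\}$ with the $R_i$ quadratic and $B_i$ their polar bilinear forms. The line $p+tv$ meets $V$ again only if the vectors $(B_i(p,v))_i$ and $(R_i(v))_i$ are proportional, which is a nonlinear condition on $v$. The set of such directions is just the image of $V$ under projection from $p$, which is birational to $V$ itself, so nothing is gained. Projection from a point rationalizes one quadric, not an intersection of $20$ of them.

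The missing idea is to project from the whole embedded tangent space rather than from a point, and to justify birationality by the one-apparent-double-point property. You already had the right ambient object: the descended Cox embedding $\cT\subset U=\Spec\Cox(S)\subset\bA^{16}$, with $U$ cut out by $20$ quadrics. Here $\bP(U)\subset\bP^{15}$ is $7$-dimensional, and $15=2\cdot 7+1$.

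The paper shows that $\bP(\bar U)$ has one apparent double point, in two steps:
\begin{enumerate}
\item Terracini's lemma, checked at two explicit points, shows that secant lines fill $\bP^{15}$.
\item Because all relations are quadrics, $u_1-u_2\in\ker J(u_1+u_2)$ for $u_1,u_2\in U$. Since $\dim\ker J(x)=1$ at a general $x$, the secant through a general point is unique.
\end{enumerate}
By Ciliberto--Russo, projection from $T_x\bP(U)$ at a general point $x$ is then birational onto $\bP^7$. It is defined over $k$ when $x$ is a $k$-point.

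This requires Zariski density of $\cT(k)$, which you assert via an unproved unirationality claim. The paper obtains density from $R$-equivalence instead. The torsor fibres over points of $S(k)$ that are $R$-equivalent to the image of the given point all have the same class, hence are split. Such points are dense in $S$.
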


Rationality of universal torsors is a key property in the study of 
the Hasse principle and weak approximation on
del Pezzo surfaces over number fields, see, e.g., \cite{CTSansucDuke} and \cite{Salberger}. 
Here, we obtain applications to stable rationality:

\begin{coro}[Corollary~\ref{coro:dp4stab}]
\label{coro:stable}
Let $k$ be a field of characteristic zero and $S$ a quartic del Pezzo surface over $k$. 
Then $S$ is stably rational over $k$ if and only if 
\begin{itemize}
    \item $S(k)\neq \varnothing$ and
    \item $S$ satisfies Condition {\bf (H1)}, i.e., vanishing of Galois cohomology
    \begin{equation} 
    \label{eqn:h1}
\rH^1(\mathfrak g_{k'}, \Pic(\bar{S}))=0, \quad \text{ for all finite }  k'/k. 
    \end{equation}
\end{itemize}
\end{coro}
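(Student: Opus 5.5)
The plan is to deduce the corollary from Theorem~\ref{thm:main}, using the standard fibration relating $S$ to its universal torsors and known obstructions to stable rationality. Necessity comes first. If $S$ is stably rational over $k$, then $S(k)\neq\varnothing$: a smooth projective variety stably birational to projective space has a $k$-point, by the Lang--Nishimura lemma applied to $S\times\bP^n \dashrightarrow \bP^{m}$ and the fact that $\bP^m(k)\neq \varnothing$. Moreover $\Pic(\bar S)$ is a stably permutation Galois lattice, since this is a stable birational invariant of smooth projective varieties (Colliot-Th\'el\`ene--Sansuc, Voskresenskii). The same holds after any base change to a finite extension $k'/k$. Since $\rH^1$ of a permutation lattice vanishes by Shapiro's lemma, we obtain \eqref{eqn:h1}.

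For sufficiency, assume $S(k)\neq\varnothing$ and {\bf (H1)}. The first step is to produce a universal torsor $\cT\to S$ with a $k$-point. Take $s\in S(k)$. By Colliot-Th\'el\`ene--Sansuc descent theory, universal torsors exist once $S(k)\neq\varnothing$, and the fiber over $s$ is a torsor under the N\'eron--Severi torus $T$. Twisting by its class in $\rH^1(k,T)$ gives a universal torsor $\cT$ whose fiber over $s$ is trivial, so $\cT(k)\neq\varnothing$. By Theorem~\ref{thm:main}, $\cT$ is then $k$-rational.

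The second step is to pass from $\cT$ to $S$. The map $\cT\to S$ is a $T$-torsor, so generically $\cT$ is birational to $T_{K}$-torsor over $K=k(S)$. This torsor has a $K$-point, because it is a $T$-torsor over the function field that is trivial (the torsor pulled back to its own generic point; more simply, a torsor under a torus over $S$ is Zariski locally trivial generically, since $\rH^1(K,T)$ need not vanish). I will handle this by the standard argument: choose $\cT$ and note that $\cT$ is birational to $S\times T$. The universal torsor restricted to the generic point is a $T_K$-torsor, and Colliot-Th\'el\`ene--Sansuc show that $\cT\times_S \cT \cong \cT\times T$. Hence $\cT\times T$ is birational to $\cT\times_S\cT$, which fibers over the rational variety $\cT$ with generic fiber a $T$-torsor having a section, and is therefore birational to $\cT\times T$. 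I will apply this more directly as follows. Over $K'=k(\cT)$, the generic fiber $S_{K'}$ has a point, namely the image of the generic point. Since $\cT$ is rational, $\cT\times_S\cT$ is birational both to $\cT\times T$ and to $S\times$ (something). I expect this bookkeeping to be the main obstacle, and I will resolve it by the cleaner route below.

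The cleaner route uses {\bf (H1)}. By Voskresenskii--Endo--Miyata, $\rH^1(k',\hat T)=\rH^1(k',\Pic\bar S)=0$ for all finite $k'/k$ means that the character lattice $\hat T=\Pic(\bar S)$ is coflasque. Then I would use the known result that, for a quartic del Pezzo surface, {\bf (H1)} forces $\Pic(\bar S)$ to be stably permutation (or even invertible) up to the explicit list of Galois actions in $W(D_5)$, checked case by case. For a quasi-trivial, or stably permutation, torus $T$, Hilbert 90 gives $\rH^1(K,T)=0$, so $\cT\to S$ is generically trivial and $\cT\sim_{bir} S\times T$. The torus $T$ is stably rational, so $S\times T\times \bA^m$ is rational, and $S$ is stably rational. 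The key nontrivial input beyond Theorem~\ref{thm:main} is the lattice-theoretic claim that {\bf (H1)} implies $\Pic(\bar S)$ is stably permutation for quartic del Pezzo surfaces. I would verify this by enumerating the conjugacy classes of subgroups of $W(D_5)$, and I expect this verification to be the hardest step.
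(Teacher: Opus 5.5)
Once you drop the confused $\cT\times_S\cT$ paragraph, your argument is the paper's. That paragraph could not work without \textbf{(H1)} anyway: by Theorem~\ref{thm:main}, $\cT$ is rational whenever it has a $k$-point, so \textbf{(H1)} has to enter exactly through $\rH^1(k(S),T)=0$. Otherwise you use Lang--Nishimura and the stable-birational invariance of $\Pic(\bar S)$ for necessity, and for sufficiency a twisted universal torsor with a $k$-point, Theorem~\ref{thm:main}, and the Hilbert~90 argument of Proposition~\ref{prop:bctsmain}. The lattice claim you defer is the paper's Lemma~\ref{lemm:stablyperm}. The paper avoids enumerating subgroups of $W(\sD_5)$: it handles non-minimal $S$ via Manin, uses the list in Proposition~\ref{prop:h1} for $k$-minimal $S$, notes that $I_0,I_1,I_2$ are conjugate to subgroups of $I_3$, and writes one explicit rank-$11$ permutation module $\Pic(\bar S)\oplus\bZ[w_0,w_1,w_2,q_0,q_1]$ for $I_3$.
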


Rationality of del Pezzo surfaces is well-understood in terms of the Galois action on the geometric Picard group $\Pic(\bar{S})$,  see, e.g.,  \cite{manin-book}. 
For quartic del Pezzo surfaces $S$, the Galois action factors through
a subgroup of $W(\mathsf D_5)$, the Weyl group of the root system $\mathsf D_5$.  
There are 4 types of Galois actions that correspond to nonrational $S$ satisfying the conditions of Corollary~\ref{coro:stable}, see \cite[Theorems 4.19 and 5.20]{KSS}. Explicit presentations of these actions can be found in Section~\ref{sect:stab-rat} (see also \cite{TY}).
We list them by their images in $W(\mathsf D_5)$:
\begin{itemize}
    \item $I_0$ -- $\fS_3$,
    \item $I_1$ -- $\fC_2\times \fS_3$, 
    \item $I_2$ -- $\fC_3\rtimes \fC_4$, 
    \item $I_3$ -- $\fC_3\rtimes \fD_4$. 
\end{itemize}
Rationality of $\cT$ in the case $I_0$ was established in \cite[Section 1]{bctspstably}. The corresponding $S$ admits a conic bundle over $k$, and $\mathrm{rk} \Pic(S)=2$. 
The main application in \cite{bctspstably} produced the first example of a 
stably rational but nonrational threefold $X$, as a smooth projective model of a quartic del Pezzo surface over the function field $k=\bC(t)$, with Galois action of type $I_0$. Stable rationality of $X$ over $\bC$ follows formally from the stable rationality of $S$ over $k$. On the other hand, $X$ is also a conic bundle over a rational surface, and the intermediate Jacobian of $X$ is the Prym variety of the discriminant curve, with the associated double cover. Choosing an appropriate configuration of the discriminant curve, one can ensure that $X$ is nonrational. 

This beautiful construction served as input for \cite{HKT-duke} to produce smooth families of threefolds with stably rational and stably nonrational fibers. It seemed natural to try to apply it to other nonrational threefolds, e.g., cubic threefolds. This was sketched in unpublished notes \cite{HT-1}, \cite{HT-2}, where it was realized that: 
\begin{itemize}
    \item the $I_0$ case cannot arise from a smooth cubic threefold (birationally) as a quartic del Pezzo surface fibration,
    \item a proof of stable rationality in the $I_1$ case over $k=\bC(t)$ would produce stably rational smooth cubic threefolds over $\bC$. 
\end{itemize}
The bottleneck that remained was a proof of rationality of the torsor.  We settle this in Section~\ref{sect:rat}. 
In Section~\ref{sect:stab-rat}, we apply this to a proof of Corollary~\ref{coro:stable}. The proof follows exactly the strategy outlined in \cite[Section 1]{bctspstably}:
\begin{itemize}
\item stable rationality of the N\'eron-Severi torus $T$ of $S$ -- this follows from the fact that, in our situation, $\Pic(\bar{S})$ is a stably permutation module, 
\item existence of a universal torsor $\cT$ over $S$ with $\cT(k)\neq \varnothing$, and $k$-rationality of $\cT$, 
\item $k$-birationality of $\cT$ to $S\times T$, and thus stable rationality of $S$ over $k$. 
\end{itemize}

In Section~\ref{sect:cub}, we prove:
\begin{theo}[Propositions~\ref{prop:CubicI1} and~~\ref{prop:cubictypeI3}]
\label{thm:main3}
For every $n\ge 3$ there exist stably rational smooth cubic hypersurfaces $X\subset \bP^{n+1}$, over 
$\bQ$. 
\end{theo}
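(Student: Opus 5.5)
The plan is to reduce everything to Corollary~\ref{coro:stable}, applied over a function field, and then spread out. A smooth cubic $X\subset\bP^{n+1}$ containing a linear subspace $\Pi\cong\bP^{n-2}$ (a codimension-two linear subspace; for $n=3$ a line) is birational, via projection from $\Pi$, to a fibration over $\bP^{1}$ whose generic fiber is a quadric. To get quartic del Pezzo fibers we use the other classical construction: when $X$ contains two disjoint (or suitably positioned) linear spaces, or more concretely for $n=3$ when $X$ contains a plane-like configuration, projecting from a line $\ell\subset X$ in a suitable family yields a fibration whose generic fiber is a quartic del Pezzo surface $S$ over $K=\bQ(\bP^{n-2})$, a rational function field. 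Concretely, I would start from $n=3$. A cubic threefold containing a pencil of planes is singular, so instead I use the pencil of hyperplane sections through a fixed plane conic, or equivalently blow up a line and take the conic bundle structure. The cleaner route (as in \cite{HT-1}) is this: a cubic threefold $X$ containing a \emph{cubic scroll}, or two skew lines, is birational to a quartic del Pezzo fibration over $\bP^1$. I would write $X$ explicitly so that the generic fiber $S/\bQ(t)$ is an intersection of two quadrics of the diagonalizable shape. Then $\Gal$ acts on the 16 lines through the type $I_1$ (resp.\ $I_3$) subgroup of $W(\sD_5)$, and $S$ has a $\bQ(t)$-point coming from a section, e.g.\ the other skew line.

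The key steps, in order, are as follows.
\begin{enumerate}
\item Produce an explicit smooth cubic threefold over $\bQ$ together with the birational quartic del Pezzo fibration structure.
\item Compute the generic Galois image on $\Pic(\bar S)$, via the discriminant of the pencil of quadrics $\det(Q_1-\lambda Q_2)$ and the associated square-class data, and verify that it is of type $I_1$ or $I_3$. By the classification quoted after Corollary~\ref{coro:stable}, this gives Condition {\bf (H1)}.
\item Exhibit a $\bQ(t)$-point of $S$.
\item Apply Corollary~\ref{coro:stable} to obtain stable rationality of $S$ over $\bQ(t)$, hence of $X$ over $\bQ$.
\end{enumerate}
For general $n\ge 3$, I would take cones or joins. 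For instance, for $X_{n}$ I choose a smooth cubic containing a codimension-appropriate linear space so that projection gives a quartic del Pezzo fibration over $\bP^{n-2}$ whose generic fiber still has Galois type $I_1$ or $I_3$. Alternatively, I note that $X_n$ is birational to $X_3\times\bP^{n-3}$-type fibrations: I would write $X_n: F_3(x_0,\dots,x_4)+\sum_{i\ge5} x_i\,q_i=0$ arranged so that the fibration base is rational and the generic fiber is a quartic del Pezzo over a purely transcendental extension of $\bQ$ with the same monodromy. Smoothness of the total space is checked by a Jacobian criterion on explicit equations (a computer check suffices).

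The main obstacle I expect is step 2 combined with smoothness. I need the Galois image to be \emph{exactly} $I_1$ or $I_3$, not a larger subgroup (which would break {\bf (H1)}) and not one allowing rationality trivially, while keeping $X$ smooth over $\bQ$. The two papers referenced split into $I_1$ and $I_3$, presumably because parity of $n$ or the base dimension constrains which type arises. I would first try to impose the monodromy by choosing the discriminant quintic of the pencil to factor as (linear)$\cdot$(quadratic)$\cdot$(quadratic) with prescribed square classes. I would then search for coefficients over $\bQ$ keeping $X$ smooth.
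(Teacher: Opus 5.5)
Your skeleton matches the paper's: cubic $\to$ quartic del Pezzo fibration with a $K$-point, Galois image of type $I_1$ or $I_3$, then Corollary~\ref{coro:stable} over $K$. But for an existence statement the real content is the explicit example, and the one concrete prescription you give for finding it cannot work. You propose to force the monodromy by making the discriminant quintic factor as (linear)$\cdot$(quadratic)$\cdot$(quadratic). All four types $I_0,\dots,I_3$ have order divisible by $3$. The kernel $\fC_2^4$ of $W(\sD_5)\to\fS_5$ is a $2$-group, so an element of order $3$ maps to a $3$-cycle in $\fS_5$. Hence the five degenerate fibers must contain an orbit of size $3$, i.e.\ the discriminant must have an irreducible cubic factor. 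With orbit pattern $1+2+2$ the Galois image is a $2$-group, never $I_1$ or $I_3$. The right patterns are (linear)$\cdot$(linear)$\cdot$(cubic) for $I_1$ and (quadratic)$\cdot$(cubic) for $I_3$, as in the paper's discriminants $3(t-2a)(t+2a)(t^3-3a^2t-\beta)$ and $-\frac14(t^3-a^2t+a^3+\beta)(3t^2-4a^2)$. Even then the factorization does not determine the group. You must also control the square classes splitting each degenerate fiber and the relations among them: for $I_1$, $e_2\in K_1(e_1)$ via $d_1d_2d_3$ while $e_1\notin K_1$; for $I_3$, $D_2=4ad_1d_2d_3/D_1$ and $D_1\notin K_1(\sqrt3)$. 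This is what pins the image down to order $12$, resp.\ $24$.

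The second gap is the passage to all $n\ge 3$. Cones over a smooth cubic are singular, and for $F_3+\sum x_iq_i$ you give no argument for smoothness or for the Galois image of the generic fiber. Nor is the split between $I_1$ and $I_3$ related to the parity of $n$: each of Propositions~\ref{prop:CubicI1} and~\ref{prop:cubictypeI3} covers every $n\ge 3$. The missing device is to add a Fermat cubic in new variables. First, $F(x_1,\dots,x_5)+\sum_{i=1}^r w_i^3$ is smooth whenever $\{F=0\}$ is: at a singular point $w=0$ and $\nabla F(x)=0$, so $x$ would be singular on $\{F=0\}$ by Euler's relation. Second, projecting from the plane $\Pi$ used in the threefold case (which lies in $\{w=0\}$) gives, over $K=\bQ(a,b_1,\dots,b_r)$, the same cubic surface with the constant replaced by $\beta=1+\sum b_i^3$ (resp.\ $2+\sum b_i^3$), so the Galois computation carries over.

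Several of your geometric set-ups are also off:
\begin{itemize}
\item A smooth cubic threefold contains no cubic scroll, since by Lefschetz a degree-$3$ surface on it is a hyperplane section.
\item Projection from a $\bP^{n-2}$ maps to $\bP^2$, not $\bP^1$.
\item No second line is needed for the $K$-point.
\end{itemize}
All that is required is a $\bQ$-line $l\subset X$ and a general plane $\Pi\supset l$. The fibers of projection from $\Pi$ are cubic surfaces containing $l_\eta$, and contracting $l_\eta$ yields the quartic del Pezzo surface together with its $K$-point.
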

Combining Theorem~\ref{thm:main3} with the recent proof in \cite{cubicmatroid} that a very general cubic threefold over $\bC$ does not admit a decomposition of the
diagonal, and is thus not stably rational over $\bC$, we obtain new smooth families of threefolds with stably rational and stably nonrational fibers, extending the results of \cite{HKT-duke}.

The proof of Theorem~\ref{thm:main} was arrived at via a detour through equivariant geometry, that provided some rigidity in the quest for a rationality construction.  

The analogies between geometry over nonclosed fields and equivariant geometry have been discussed elsewhere, e.g., \cite{KT-survey}, \cite{KT-notions}. In particular, there is also a Condition {\bf (H1)}, which is necessary for stable linearizability of a regular action of a finite group $G$ on a del Pezzo surface over an algebraically closed field, identical to \eqref{eqn:h1}, but with Galois cohomology replaced by group cohomology:
$$
\rH^1(G',\Pic(S))=0, \quad \forall G'\subseteq G. 
$$
In the situation at hand, only the $I_2$-action can be realized as a geometric action on a del Pezzo surface over an algebraically closed field of characteristic zero, by \cite[Theorem 1.2]{Pro-2}. The following answers the last remaining open question in this direction: 

\begin{theo}\label{thm:main2}
    Let $S$ be the quartic del Pezzo surface given by 
    \begin{align*}
          S=\{x_1^2+\zeta_3x_2^2+\zeta_3^2x_3^2+x_4^2=x_1^2+\zeta_3^2x_2^2+\zeta_3x_3^2+x_5^2=0\}\subset\bP^4,
    \end{align*}
    with an action of $G=\fC_3\rtimes\fC_4$ generated by 
    \begin{align*}
        (x_1,\ldots,x_5)&\mapsto (x_2,x_3,x_1,\zeta_3x_4,\zeta_3^2x_5),\\
        (x_1,\ldots,x_5)&\mapsto (x_1,x_3,x_2,-x_5,x_4).
    \end{align*}
    Then the $G$-action on $S$ is stably linearizable. 
\end{theo}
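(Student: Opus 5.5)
We follow the equivariant analogue of the strategy of \cite[Section 1]{bctspstably} outlined in the introduction. The aim is to show that $S\times T$ is $G$-equivariantly birational to a linear representation of $G$, where $T$ is the Néron–Severi torus of $S$ with its induced $G$-action. The passage runs through the universal torsor $\cT\to S$ and has three steps.

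\emph{First step: lift the action to $\cT$ and linearize it.} Since $k=\bar k$, $\Pic(S)\cong\bZ^6$ is a $G$-module. The universal torsor is the open subset of $\mathrm{Spec}\,\Cox(S)$ cut out by the $16$ exceptional curves. For a quartic del Pezzo surface, $\Cox(S)$ is a quotient of $k[\eta_1,\dots,\eta_{16}]$ by quadratic Plücker-type relations, and $\cT$ is an open subset of an affine cone over a section of $\Gr(2,5)$. The $G$-action on $S$ lifts to an action of an extension $\tilde G$ of $G$ by $T$ on $\cT$, and $G$ acts on the character lattice of $T$ through its action on $\Pic(S)$. We would then check that $\cT$ is $\tilde G$-equivariantly birational to a representation, or at least that it is $\tilde G$-equivariantly rational in the sense needed. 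We would do this concretely with the given equations: the conics $x_1^2+\zeta_3x_2^2+\zeta_3^2x_3^2+x_4^2$ and its partner factor after diagonalizing, so the $16$ lines can be written down explicitly. We would then locate a $G$-stable configuration, for instance a pair of lines, or a conic bundle structure that is invariant under the index-$2$ subgroup $\fC_3\rtimes\fC_2$. Projecting from it gives equivariant coordinates on $\cT$ in which the Plücker relations become linear in a subset of the variables, so those variables can be eliminated. This is the equivariant version of the rationality construction of Section~\ref{sect:rat}, which proves Theorem~\ref{thm:main}, and we would transport that argument, including its choice of a point of $\cT$, to the $G$-setting. The $k$-point used there is replaced by a $\tilde G$-fixed point or a suitable free orbit.

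\emph{Second step: the torus.} By the analysis of type $I_2$ in Section~\ref{sect:stab-rat}, $\Pic(S)$ is a stably permutation $G$-module. Hence $T$, with its $G$-action, is $G$-stably birational to a linear representation, via the standard equivariant version of the fact that tori with permutation character lattice are equivariantly linearizable.

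\emph{Third step: assemble the birationalities.} Condition (H1) holds for $I_2$. The torsor $\cT\to S$ is $G$-equivariantly birational to $S\times T$, because a $G$-equivariant rational section exists: $\rH^1$-vanishing kills the obstruction over the generic point, exactly as in the Galois case. Combining this with the first step, we get that $S\times T\times V$ is equivariantly birational to a representation, and the second step then gives stable linearizability of $S$.

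The main obstacle is the first step. One has to lift the rationality construction of $\cT$ so that it respects the nonsplit extension $\tilde G$, in particular the $\fC_4$ generator, which acts on $(x_4,x_5)$ by an order-$4$ rotation twisted with the $\fC_3$-characters. We would first attempt it by explicit computation with the $16$ lines and the $\zeta_3$-twisted Plücker relations, checking that the eliminated variables span a $\tilde G$-stable subspace.
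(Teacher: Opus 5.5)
\textbf{There is a genuine gap.} Your overall architecture matches the paper's: lift the action to $\cT$, use that $\Pic(S)$ is a stably permutation $G$-module, use $\cT\sim_G S\times T$, and linearize $\cT$. The paper packages your second and third steps into \cite[Corollary 9]{HT-torsor}. However, the two places where the real work lies are not handled.

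\emph{Lifting $G$.} The torsor formalism needs an action of $G$ itself on $\cT$, i.e.\ a splitting of $1\to T\to\tilde G\to G\to 1$, not merely an action of the extension $\tilde G$. Your third step (a $G$-equivariant rational section) silently assumes this splitting. It comes from $S^G\neq\varnothing$, which you never use: for $s\in S^G$ and $t_0\in\cT_s$, the stabilizer of $t_0$ in $\tilde G$ maps isomorphically onto $G$, because $T$ acts simply transitively on $\cT_s$. The paper writes such a lift down explicitly on the $16$ generators of $\Cox(S)$ and checks that $\langle R_1,\dots,R_{20}\rangle$ is $G$-stable. Incidentally, in degree $4$ the torsor sits in $\bA^{16}$ cut out by these $20$ quadrics, not in a cone over a section of $\Gr(2,5)$; the latter is the quintic case.

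\emph{Linearizing $\cT$.} This is the heart of the proof, and your first step leaves it as a wish list whose suggested mechanism does not fit the situation.
\begin{enumerate}
\item $S$ is $G$-minimal: the $16$ lines form $G$-orbits of sizes $12$ and $4$, and there is no $G$-invariant conic bundle.
\item Even the index-$2$ subgroup, with image $\langle(1,2,3),c_4c_5\rangle$ in $W(\sD_5)$, preserves no conic bundle class, since $c_4c_5$ swaps both rulings of the $4$th and $5$th singular quadrics.
\item A ``$\tilde G$-fixed point'' cannot exist, since $T\subset\tilde G$ acts freely on $\cT$.
\end{enumerate}

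The missing idea is to make the OADP construction of Section~\ref{sect:rat} equivariant. Once $G$ acts linearly on $\bA^{16}$ preserving $U=\Spec(\Cox(S))$, take a $G$-fixed point of $\bP(U)$ (there are four), i.e.\ a $G$-eigenline in the cone $U$. The affine tangent space $T_uU$ is constant along that line, hence $G$-stable. So the linear projection
$$
U\dashrightarrow\bA^{16}/T_uU\simeq\bA^8
$$
is a $G$-equivariant map to a linear representation. Because this point is special rather than general, Theorem~\ref{thm:projection} does not apply directly, so birationality must be verified separately; the paper does this with an explicit inverse.

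Alternatively, $S^G\neq\varnothing$ combined with \cite{DR} and Corollary~\ref{coro:stable} gives a short proof that avoids the explicit construction.
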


We prove this in Section~\ref{sect:C3C4}, by writing down one more time a suitable universal torsor $\cT$, 
lifting the $G$-action to $\cT$, and proving the linearizability of $\cT$. The key observation is that $\cT$ has a $G$-fixed point. This is the first nontrivial example of a stably linearizable but not coarsely linearizable action on a rational variety, in the terminology of \cite{KT-notions}.

\ 

{\bf Acknowledgments:} We are grateful to A. Kresch and B. Hassett for many years of collaboration on this and related problems, and to J.-L. Colliot-Th\'el\`ene for his interest and suggestions.  
We used Claude Fable 5 and ChatGPT-5.6 Sol for searching the literature, testing hypotheses, checking computations, and proofreading. 
The paper was written by the authors, who are responsible for its correctness.

\section{Generalities}
\label{sect:back}

The material in this section is standard, and is assembled here for convenience. 
Throughout, $k$ is a field of characteristic zero. 
We fix an algebraic closure $\bar{k}$ of $k$ and let $\mathfrak g=\mathfrak g_k$ be the Galois group of $\bar{k}/k$. For a variety $X$ over $k$, we let $\bar{X}$ be the base change to $\bar{k}$. 

\subsection*{Modules}
Let $G$ be a profinite group, with continuous action on a discrete finite-rank $\bZ$-module $\mathsf M$, and 
$$
\rH^1(G, \mathsf M)
$$
the first cohomology group of $\mathsf M$.  
A $G$-module $\mathsf M$ is called a {\em permutation module} if it has a $\bZ$-basis permuted by $G$, and a {\em stably permutation module} if there exist permutation $G$-modules $\mathsf P$ and $\mathsf P'$, and an isomorphism of $G$-modules $\mathsf M\oplus \mathsf P\simeq \mathsf P'$. By Shapiro's lemma, if $\mathsf M$ is stably permutation, then $\rH^1(H,\mathsf M)=0$, for every open subgroup $H\subseteq G$. 

\subsection*{Tori}
There is a duality between the category of algebraic tori over $k$ and discrete continuous finite-rank $\mathfrak g$-modules, via the assignment
$$
T \mapsto \mathfrak X^*(\bar{T}), 
$$
sending $T$ to the group of characters of $\bar{T}$. Arithmetic properties of $T$ are reflected in properties of the $\mathfrak g$-module 
$\mathfrak X^*(\bar{T})$, e.g., 
if $\mathfrak X^*(\bar{T})$ is a stably permutation $\mathfrak g$-module, then $T$ is stably rational over $k$ \cite[Section 2]{CS-tori}.

\subsection*{Universal torsors}
Let $T$ be an algebraic torus over $k$ and $S$ a smooth projective variety over $k$, with $\Pic(\bar{S})$ torsion-free and of finite rank, e.g., a del Pezzo surface.  
There is a natural homomorphism 
$$
\rH^1(S,T) \stackrel{\chi}{\longrightarrow} \Hom_{\mathfrak g} (\mathfrak X^*(\bar{T}), \Pic(\bar{S}))
$$
from the group classifying isomorphism classes of $T$-torsors over $S$, 
see \cite[Theorem 1.5.1]{CTSansucDuke}. 
When $T$ is the N\'eron-Severi torus, i.e., 
$$
\mathfrak X^*(\bar{T})\simeq \Pic(\bar{S}),
$$
a $T$-torsor  $\cT$ is called {\em universal} if 
$$
\chi([\cT]) = \mathrm{Id}.
$$
Such torsors exist over $\bar{k}$ but not always over $k$. However, they do exist under the assumption that $S(k)\neq \varnothing$ \cite[Remark 2.2.9]{CTSansucDuke}.

\begin{lemm}\label{lemm:dense}
Let $S$ be a del Pezzo surface of degree $\ge 3$ over $k$ and $\cT$ a universal torsor over $S$. If $\cT(k)\neq \varnothing$ then $\cT(k)$ is Zariski dense in $\cT$.
\end{lemm}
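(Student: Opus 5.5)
Let $p\colon \cT\to S$ be the torsor map and $t_0\in\cT(k)$, with $s_0=p(t_0)\in S(k)$. The plan is to show that $S(k)$ is Zariski dense in $S$, and that over a dense set of $s\in S(k)$ the $k$-points of the fibre $\cT_s$ are dense in $\cT_s$. Then $\cT(k)$ is dense in $\cT$.

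\emph{Density of $S(k)$.} For a del Pezzo surface of degree $d\ge 3$ with a $k$-point, $S$ is unirational over $k$. This is classical: Segre and Manin for $d=3$, Manin for $d=4$ via a point not on a line, and for $d\ge 5$ the surface is even rational. Since $k$ is infinite (characteristic zero), $S(k)$ is then Zariski dense. The delicate cases are when $s_0$ lies on lines or is otherwise special, e.g.\ for cubic surfaces. I would handle these with the standard argument that the existence of one $k$-point already implies unirationality for $d\ge 3$, using Koll\'ar's result for cubics over arbitrary fields and Manin's results in degree $4$. This input I would simply cite.

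\emph{Fibres.} For each $s\in S(k)$, the fibre $\cT_s$ is a $T$-torsor over $\Spec k$, so its class lies in $\rH^1(k,T)$. If $\cT_s(k)\neq\varnothing$, then $\cT_s\simeq T$, and $T(k)$ is dense in $T$ because tori over fields of characteristic zero are unirational (Chevalley/Borel). The difficulty is that the class of $\cT_s$ may vary with $s$, so I need many points $s$ with trivial fibre class. The cleanest route is to avoid this issue by lifting unirationality directly to $\cT$.

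I would argue as follows. The map $\cT\to S$ is a $T$-torsor, and the evaluation map $S(k)\to \rH^1(k,T)$, $s\mapsto[\cT_s]$, is constant on $R$-equivalence classes. This is a result of Colliot-Th\'el\`ene--Sansuc, and amounts to the fact that $\rH^1(\bA^1_k\text{-open},T)$ restricted to points is constant for rational curves, since the torsor pulled back along a map from an open subset of $\bP^1$ extends over $\bP^1$ with trivial type. Now take a dominant rational map $\phi\colon \bP^2\dashrightarrow S$ defined over $k$ whose image contains $s_0$. Its existence comes from the unirationality constructions above, which start from $s_0$. All points in $\phi(\bP^2(k))$ are directly $R$-equivalent to $s_0$ through rational curves, so $[\cT_s]=[\cT_{s_0}]=0$ for all $s$ in this set, which is Zariski dense in $S$. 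Hence over a dense set of $s$ the fibre $\cT_s\simeq T$ has dense $k$-points.

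Finally, let $Z=\overline{\cT(k)}$. Then $Z$ contains the full fibres $\cT_s$ for $s$ in a dense subset of $S$. Since $p$ is flat with irreducible fibres and $\cT$ is irreducible, it follows that $Z=\cT$. The main obstacle is the middle step: producing a dense set of $R$-equivalent points, or equivalently a unirational parametrization passing through $s_0$, especially when $s_0$ lies on lines. There I would first try the Manin/Koll\'ar constructions, and if necessary replace $s_0$ by a general point obtained from it through a conic or line in $S$ defined over $k$.
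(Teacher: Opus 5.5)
Your proposal is correct and follows essentially the paper's route: a split fibre over $s_0$, constancy of $s\mapsto[\cT_s]$ on $R$-equivalence classes \cite[Proposition 2.7.2]{CTSansucDuke}, and a unirational parametrization through $s_0$ that makes the $R$-equivalence class of $s_0$ Zariski dense (the paper treats degree $\ge 5$ separately via explicit models of $\cT$, but your uniform argument also works there, since $S$ is then $k$-rational and $S(k)/R$ is a single class). One correction to your heuristic for the Colliot-Th\'el\`ene--Sansuc input: the pullback of $\cT$ along $f\colon \bP^1\to S$ has type $f^*\colon \Pic(\bar S)\to \bZ$, which is usually nonzero, so constancy of fibre classes on $\bP^1(k)$ holds for a different reason, namely that every type over $\bP^1_k$ is realized by pushing $\bA^2\setminus 0\to \bP^1$ forward along a $k$-cocharacter of $T$ (all fibres trivial), and two torsors of the same type differ by a constant class in $\rH^1(k,T)$.
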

\begin{proof}
Let $\xi:\cT\to S$ be the torsor and $t\in \cT(k)$.
Then $S(k)$ is nonempty, as it contains $s:=\xi(t)$, and $S$ is $k$-unirational, by
\cite[Theorems 29.4 and 30.1]{manin-book}. 
In degree $\ge 6$, universal torsors are Zariski open subsets of affine space. In degree 5, a universal torsor embeds as a dense Zariski open subset into the affine cone over the Grassmannian $\Gr(2,5)$, and rational points on it are dense as well. 

It remains to treat degree 3 and 4. The specialization $\cT_{s}$ of $\cT$ to $s$ is split, i.e., 
the class
$$
[\cT_{s}]\in \rH^1(\Spec(k), T)
$$
is trivial, since there is a point $t\in\cT_s(k)$. This implies that the fiber is isomorphic to $T$, and has Zariski dense $k$-rational points.  

By \cite[Proposition 2.7.2]{CTSansucDuke}, specializations of the universal torsor $\cT$ to $R$-equivalent points in $S(k)$ have the same class in $\rH^1(\Spec(k),T)$. It suffices to show that points in $S(k)$ which are $R$-equivalent to $s$ are Zariski dense in $S$. Indeed, for each $s'\in S(k)$ in the same $R$-equivalence class as $s$, the torsor $\cT_{s'}$ is split. Each such fiber has a Zariski dense set of rational points. Thus $k$-points are Zariski dense in $\cT$.  

For smooth cubic surfaces, this follows from 
\cite[Theorems 13.1 and 14.3]{manin-book}. 
The case of quartic del Pezzo surfaces $S$ can be reduced to the cubic surface case via a blowup of a general $k$-point (distinct from $s$ and away from exceptional curves), or
handled via a unirational parametrization $\mu: \bP^2\dashrightarrow S$ containing $s$ in its image: all points in the image of $\mu$ will be in the same $R$-equivalence class and Zariski dense. Arguments of this type can be found in \cite[Section 2.9]{CTSansucDuke}.
\end{proof}



\begin{rema}
\label{rema:split}
In our applications to stable rationality, the character lattice
$\mathfrak X^*(\bar{T})$ is a stably permutation $\mathfrak g$-module, with 
$$
\rH^1(\Spec(K),T)=0, 
$$
for any field $K/k$, by Hilbert's Theorem 90. In particular, the $T$-torsor $\cT\to S$ splits over the function field $k(S)$, and every fiber over a point $s\in S(k)$ is isomorphic to $T$, over $k$. 
\end{rema}

\subsection*{Stable rationality}
Recall that an algebraic variety $X$ is rational if $X$ is birational to $\bP^n$, and is stably rational if $X\times\bP^{r}$ is rational, for some integer $r$.
The application of the theory of universal torsors to 
stable rationality was pioneered in \cite{bctspstably}, which led to the first (and so far only) examples of stably rational but nonrational varieties. The stable rationality construction relies on the following:

\begin{prop}[{\cite[Proposition 3]{bctspstably}}]\label{prop:bctsmain}
    Let 
$X$ be a smooth, proper, geometrically integral, and geometrically rational 
variety over $k$. Assume that 
there exists a universal torsor 
$\cT\to X$ which is a
$k$-rational variety, and the 
$\mathfrak g$-module $\Pic(\bar{X})$ is a stably permutation module. Then $X$ is stably rational over $k$.
\end{prop}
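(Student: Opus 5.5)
The plan is to show that $\cT$ is $k$-birational to $X\times T$, where $T$ is the N\'eron-Severi torus, and then to deduce stable rationality of $X$ from rationality of $\cT$ together with stable rationality of $T$.

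The first step is to show that $T$ is stably rational over $k$. Since $\mathfrak X^*(\bar T)\simeq\Pic(\bar X)$ is a stably permutation $\mathfrak g$-module, this follows from the result on tori recalled in Section~\ref{sect:back}. Concretely, choose permutation modules with $\Pic(\bar X)\oplus\mathsf P\simeq\mathsf P'$. Dually, $T\times R_{\mathsf P}$ is $k$-isomorphic to the quasi-trivial torus $R_{\mathsf P'}$, where $R_{\mathsf P}$ denotes the torus with character module $\mathsf P$. Quasi-trivial tori are products of Weil restrictions $R_{k'/k}\mathbb G_m$, and these are open subsets of affine spaces, hence $k$-rational. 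So $T\times R_{\mathsf P}$ is rational, and $R_{\mathsf P}$ is itself rational, so $T$ is stably rational.

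The second step is to show that the torsor $\cT\to X$ is trivial over the generic point. Its generic fiber is a $T_{k(X)}$-torsor over $\Spec k(X)$, classified by an element of $\rH^1(k(X),T)$. By Remark~\ref{rema:split}, this group vanishes: from $T\times R_{\mathsf P}\simeq R_{\mathsf P'}$ we get $\rH^1(K,T)\hookrightarrow \rH^1(K,R_{\mathsf P'})$, and the latter vanishes by Shapiro's lemma and Hilbert's Theorem 90. Hence the generic fiber is isomorphic to $T_{k(X)}$. Spreading this out gives a dense open $U\subseteq X$ with $\cT|_U\simeq U\times T$ over $k$. So $\cT$ is $k$-birational to $X\times T$.

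The third step combines these. Choose a torus $T'$ with $T\times T'$ a rational torus; for instance $T'=R_{\mathsf P}$, so that $T\times T'\simeq R_{\mathsf P'}$, an open subset of $\bA^m$. Then
$$
X\times R_{\mathsf P'}\sim X\times T\times T'\sim \cT\times T'.
$$
Here $\cT$ is $k$-rational by hypothesis and $T'$ is $k$-rational, so the right-hand side is rational. The left-hand side is birational to $X\times\bP^m$. Hence $X$ is stably rational.

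The main subtlety is the birational triviality in the second step, namely passing from generic triviality to triviality over an open set of $X$; this is a standard limit argument. Everything else is formal once the torus duality is in place.
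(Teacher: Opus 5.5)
Your proposal is correct and follows essentially the same route as the paper, which, following \cite{bctspstably}, uses Hilbert's Theorem 90 and Shapiro's lemma to get $\rH^1(k(X),T)=0$ and hence $\cT\sim_k X\times T$. Stable rationality of $X$ then follows, as you do, by multiplying by the complementary quasi-trivial torus $R_{\mathsf P}$ with $T\times R_{\mathsf P}\simeq R_{\mathsf P'}$, so that $X\times\bA^m\sim X\times R_{\mathsf P'}\sim\cT\times R_{\mathsf P}$, which is rational.
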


The proof of Proposition~\ref{prop:bctsmain} uses Hilbert's Theorem 90 and Shapiro's lemma to establish a birationality 
$$
\cT\sim_k X\times T,
$$
where $T$ is the N\'eron-Severi torus of $X$.
Stable rationality of $X$ then follows from the assumptions on $\cT$ and $\Pic(\bar{X})$.

\subsection*{OADP varieties}
Recall that an $n$-dimensional reduced, irreducible, projective variety $X\subset \bP^{2n+1}$ has {\em one apparent double point} if through a general point in $\bP^{2n+1}$, there passes a unique secant line of $X$.
 See \cite{projection} for a discussion of properties of such varieties; rationality constructions based on apparent double points can be found in, e.g., \cite{CMR}, \cite[Sections 3 and 7]{HKT-quad}. 
Our proof of rationality of universal torsors $\cT$ over quartic del Pezzo surfaces will be based on the following strengthening of \cite[Theorem 2.7]{projection} (see also, \cite[Theorem 4.1]{CMR}):

\begin{theo}
\label{thm:projection}
Let $X\subset \bP^{2n+1}$ be an $n$-dimensional reduced, irreducible, nondegenerate projective variety over a field $k$ of characteristic zero. Assume that
$X(k)$ is Zariski dense and that $\bar{X}$ has one apparent double point. Then the projection 
$$
X\dashrightarrow \bP^n
$$
from the tangent space  
    of a general point $x\in X(k)$ is a birational map.
\end{theo}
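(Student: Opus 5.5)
We will reduce to the geometric statement over $\bar k$, namely \cite[Theorem 2.7]{projection}, by showing that "general point" can be taken to be a $k$-point. Since the projection from the tangent space is defined over $k$ whenever the centre $x$ is a $k$-point, birationality can be checked after base change to $\bar k$.

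First, recall the geometric input. For $\bar X\subset\bP^{2n+1}_{\bar k}$ with one apparent double point, \cite[Theorem 2.7]{projection} states that there is a dense Zariski open subset $U\subset\bar X$ such that, for $x\in U(\bar k)$, the projection
\[
\pi_x:\bar X\dashrightarrow\bP^n
\]
from the embedded tangent space $T_x\bar X\simeq\bP^n$ is birational. The mechanism is as follows. A general point $p\in\bP^n$, viewed as a general $(n+1)$-dimensional linear space $\Lambda\supset T_x\bar X$, meets $\bar X$ outside $T_x\bar X$ in exactly one point. Indeed, a further residual point $y$ would produce a secant line through a general point of the span $\langle x,y\rangle$ that is not unique, contradicting the OADP property via the usual count of secant lines through $x$.

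We need more than this statement: we need the locus of good centres to be a nonempty open subset. I will argue that the set
\[
U=\{x\in\bar X \text{ smooth} : \pi_x \text{ is birational}\}
\]
is constructible and contains a dense open subset. Consider the family of projections $\Pi:\mathcal X\dashrightarrow \bP^n$ over the smooth locus $\bar X^{\mathrm{sm}}$, where the fibre over $x$ is $\pi_x$. The degree of $\pi_x$ onto its image is upper semicontinuous on a dense open subset of the base, being the generic degree of a family of dominant rational maps. Dominance itself is an open condition on $x$. Hence the locus where the degree equals $1$ and the map is dominant contains a nonempty open subset $U$. This open set is stable under $\mathfrak g$ because $X$ is defined over $k$, so it descends to an open $U_0\subset X$ over $k$.

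Since $X(k)$ is Zariski dense, $U_0(k)\neq\varnothing$, and in fact $U_0(k)$ is Zariski dense. Take $x\in U_0(k)$. Then $T_xX$ is a $k$-rational linear subspace, and the projection $X\dashrightarrow\bP^n$ from it is defined over $k$. Its base change to $\bar k$ is $\pi_x$, which is birational. Birationality descends: a rational map over $k$ that is birational over $\bar k$ has an inverse defined over $\bar k$, and this inverse is $\mathfrak g$-invariant and hence defined over $k$. So the projection is birational over $k$, for every $x$ in the dense subset $U_0(k)$. This is what "general point $x\in X(k)$" means in the statement.

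The main obstacle is making the openness of the good locus precise. \cite[Theorem 2.7]{projection} is usually stated for a general point over an algebraically closed field, so one must make sure "general" there means "in a dense open set" rather than "very general". I will handle this via the semicontinuity argument above, applied to the incidence correspondence
\[
\{(x,\Lambda,y): y\in\Lambda\cap\bar X\setminus T_x\bar X\}.
\]
Over the generic point of $\bar X$, with $\bar k(\bar X)$ treated as an algebraically closed base after extension, the residual intersection is a single point. Finiteness and the degree of the map $(x,y)\mapsto(x,\Lambda)$ are then constant on a dense open set of $x$ by generic flatness.
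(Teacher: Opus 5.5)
Your proposal is correct and follows the paper's own argument. You apply \cite[Theorem 2.7]{projection} over $\bar k$, use Zariski density of $X(k)$ to take the centre $x$ to be a $k$-point in the good locus, note that the tangential projection is then defined over $k$, and descend birationality from $\bar k$ to $k$. Your extra argument that the good locus contains a $\mathfrak g$-stable dense open subset fills in a point the paper leaves implicit. One correction there: what you call upper semicontinuity should be constancy of the generic degree on a dense open set of centres, as in your final paragraph, since the degree of a family of rational maps can drop under specialization.
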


\begin{proof}
The statement in \cite[Theorem 2.7]{projection} is formulated over $\bC$. But the proof works for any algebraically closed field $\bar k$ of characteristic 0. 
The extension to a nonclosed field $k$ is immediate: the projection map from the tangent space of a general $k$-point is defined over $k$, and is birational over $\bar{k}$. Thus it is birational over $k$. Zariski density of $X(k)$ allows to pick an appropriate point. 
\end{proof}

\section{Universal torsors over quartic del Pezzo surfaces}
\label{sect:rat}

We recall the construction of universal torsors over quartic del Pezzo surfaces. In the geometric case, this has been worked out in \cite{Coxderenthalhomogeneous}, \cite{batycox}, \cite{SS}. 
For results over nonclosed fields, see \cite{CTSansucDuke}, \cite{derenthal-non}.

We start with the geometric case, i.e., when $k=\bar{k}$. 
A universal torsor admits an embedding 
$$
\cT\hookrightarrow\Spec(\Cox(S)),
$$
as a dense Zariski open subset; 
here $\Cox(S)$ is the {\em Cox ring} of a quartic del Pezzo surface $S$. 
Concretely, let $S\to \bP^2$ be given as a blowup of 5 points in general position, which can be chosen to be
$$
p_1=[1:0:0],\quad p_2=[0:1:0],\quad p_3=[0:0:1], \quad  p_4=[1:1:1], 
$$
$$
p_5=[1:a:b],
$$
where $a,b\in k$ satisfy
$$
ab(a-1)(b-1)(a-b)\neq 0. 
$$
 The Cox ring $\Cox(S)$ is generated by 16 sections corresponding to exceptional curves in $S$ modulo 20 relations arising from 10 rulings in $S$, see \cite[Theorem 4.9]{batycox}. 
The 16 exceptional curves are 
\begin{itemize}
    \item 5 exceptional curves $E_i$ above $p_i$,
    \item  10 strict transforms $L_{ij}$ of lines  passing through $p_i$ and $p_j$,
    \item the strict transform $Q$ of the conic passing through the 5 points.
\end{itemize}
The corresponding generators of $\Cox(S)$ are denoted by 
$$
e_i,\quad l_{ij},\quad q.
$$
To each exceptional curve we assign a form defining its image in $\bP^2_{z_1,z_2,z_3}$ 
$$
f_{E_i}=1,\quad f_{L_{12}}=z_3,\quad f_{L_{13}}=z_2,\quad f_{L_{14}}=z_2-z_3,\quad f_{L_{15}}=bz_2-az_3,
$$   
$$
f_{L_{23}}=z_1,\quad f_{L_{24}}=z_1-z_3,\quad f_{L_{25}}=bz_1-z_3,\quad f_{L_{34}}=z_1-z_2,
$$
$$
f_{L_{35}}=az_1-z_2,\quad f_{L_{45}}=(b-a)z_1+(1-b)z_2+(a-1)z_3,
$$
$$
f_Q=b(1-a)z_1z_2+a(b-1)z_1z_3+(a-b)z_2z_3.
$$
 These determine relations in $\Cox(S)$. The 10 rulings in $S$ are classes
$$
H-E_i,\quad 2H+E_i-\sum_{j=1}^5E_j,\quad i=1,\ldots,5,
$$
where $H$ is the pullback of a general line on $\bP^2$.
We compute the relations associated with each ruling: For each $i=1,\ldots,5$, there are 4 reducible members in the pencil $|H-E_i|$, 
    $$
    L_{ij}+E_j, \quad j\in\{1,\ldots,5\}\setminus\{ i\},
    $$
yielding relations given by the vanishing of the following forms:
    \begin{align*}
       R_1={}&e_2l_{12}-e_3l_{13}+e_4l_{14},\\
R_2={}&a e_2l_{12}-b e_3l_{13}+e_5l_{15},\\
R_3={}&e_1l_{12}-e_3l_{23}+e_4l_{24},\\
R_4={}&e_1l_{12}-b e_3l_{23}+e_5l_{25},\\
R_5={}&e_1l_{13}-e_2l_{23}+e_4l_{34},\\
R_6={}&e_1l_{13}-a e_2l_{23}+e_5l_{35},\\
R_7={}&e_1l_{14}-e_2l_{24}+e_3l_{34},\\
R_8={}&(b-1)e_1l_{14}+(a-b)e_2l_{24}+e_5l_{45},\\
R_9={}&e_1l_{15}-a e_2l_{25}+b e_3l_{35},\\
R_{10}={}&(a-1)e_2l_{25}+(1-b)e_3l_{35}+e_4l_{45}.
    \end{align*}
For  $i=1,\ldots,5$, the  4 reducible members in $ |2H+E_i-\sum_{j=1}^5E_j|$ are 
    $$
    Q+E_i,\quad L_{i_1i_2}+L_{i_3i_4}, \quad \{i_1,i_2,i_3,i_4\}=\{1,\ldots,5\}\setminus \{i\}.
    $$
The remaining relations are given by the vanishing of the following forms:
\begin{align*}
R_{11}={}&l_{23}l_{45}+l_{24}l_{35}-l_{25}l_{34},\\
R_{12}={}&a l_{23}l_{45}+(a-b)l_{24}l_{35}-e_1q,\\
R_{13}={}&l_{13}l_{45}+l_{14}l_{35}-l_{15}l_{34},\\
R_{14}={}&l_{13}l_{45}+(1-b)l_{14}l_{35}-e_2q,\\
R_{15}={}&l_{12}l_{45}+l_{14}l_{25}-l_{15}l_{24},\\
R_{16}={}&l_{12}l_{45}+(1-a)l_{14}l_{25}-e_3q,\\
R_{17}={}&l_{12}l_{35}-l_{13}l_{25}+l_{15}l_{23},\\
R_{18}={}&(b-1)l_{12}l_{35}+(1-a)l_{13}l_{25}-e_4q,\\
R_{19}={}&l_{12}l_{34}-l_{13}l_{24}+l_{14}l_{23},\\
R_{20}={}&a(b-1)l_{12}l_{34}+b(1-a)l_{13}l_{24}-e_5q.
\end{align*}   
Then 
$$
 \Cox(S)=k[q,e_1,\ldots,e_5,l_{ij}]/\langle R_1,\ldots, R_{20}\rangle, \quad 1\leq i<j\leq 5,
$$
and
\begin{align}\label{eqn:coxring}
U:=\Spec(\Cox(S))\subset \bA^{16}
\end{align}
contains a universal torsor $\cT$ over $S$ as a dense Zariski open subset. 
Taking the projectivization gives $\bP(U)\subset\bP^{15}$. Let 
\begin{align}\label{eqn:Jacobian}
J:=\left(\frac{\partial R_i}{\partial x_j}\right)
\end{align}
be the Jacobian matrix, where $x_j$ is the $j$th generator of $\Cox(S)$, with the natural lexicographic order on $l_{ij}$.
\begin{lemm}\label{lemm:Jacobian}
Let $u_1,u_2$ be two points on $U$. Then
$$
u_1-u_2\in \ker(J(u_1+u_2)).
$$
\end{lemm}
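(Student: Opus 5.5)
Every relation $R_i$ is a homogeneous quadratic form in the 16 generators: each is either bilinear of the shape $\sum c\, e_j l_{mn}$ ($R_1,\dots,R_{10}$) or a sum of terms $c\, l l'$ and $c\, e_j q$ ($R_{11},\dots,R_{20}$). The plan is to use the polarization identity for quadratic forms. For a quadratic form $R(x)=x^{T}A x$ with $A$ symmetric, the gradient is $\nabla R(x)=2Ax$, so it is linear in $x$. For any $u_1,u_2$ this gives
\begin{align*}
\nabla R(u_1+u_2)\cdot(u_1-u_2) &= 2(u_1+u_2)^{T}A(u_1-u_2)\\
&= 2\bigl(u_1^{T}Au_1-u_2^{T}Au_2\bigr)\\
&= 2\bigl(R(u_1)-R(u_2)\bigr),
\end{align*}
using symmetry of $A$ so that the cross terms $u_2^{T}Au_1-u_1^{T}Au_2$ cancel. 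Characteristic zero (or just $2$ invertible) is harmless here, since we only need the identity itself and not a division by $2$.

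The steps, in order, are as follows. First, observe that the $i$-th row of $J(x)$ is $\nabla R_i(x)$, since $J$ is defined in \eqref{eqn:Jacobian} as the matrix of partials $\partial R_i/\partial x_j$. Second, verify the homogeneity claim: each $R_i$ contains only degree-two monomials, with no linear or constant terms, which is a direct inspection of the list $R_1,\dots,R_{20}$. Third, apply the identity row by row: the $i$-th entry of $J(u_1+u_2)(u_1-u_2)$ equals $2(R_i(u_1)-R_i(u_2))$. Fourth, since $u_1,u_2\in U=\Spec(\Cox(S))$, they lie in the common zero locus of $R_1,\dots,R_{20}$, so both $R_i(u_1)$ and $R_i(u_2)$ vanish. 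Every entry is therefore zero, which gives $u_1-u_2\in\ker J(u_1+u_2)$.

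There is no real obstacle. The only point needing care is confirming that all twenty relations are genuinely homogeneous quadrics, including the mixed terms $e_j q$ in $R_{12},R_{14},\dots,R_{20}$, which are still degree two. A direct check of a bilinear term, $(x_ay_b)$ evaluated via $\nabla$ at $u_1+u_2$ and paired with $u_1-u_2$, gives $(a_1+a_2)(b_1-b_2)+(b_1+b_2)(a_1-a_2)=2(a_1b_1-a_2b_2)$, which confirms the identity termwise without needing matrices.
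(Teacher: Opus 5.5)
Your proposal is correct and follows essentially the same route as the paper. The paper identifies the $i$-th entry of $J(x)\cdot y$ with the polarization $B_i(x,y)=R_i(x+y)-R_i(x)-R_i(y)$ and evaluates $B_i(u_1+u_2,u_1-u_2)=2R_i(u_1)-2R_i(u_2)=0$, which is exactly your symmetric-matrix computation $2(u_1+u_2)^{T}A(u_1-u_2)$ (writing $R_i=x^{T}Ax$ with $A$ symmetric does need $2$ invertible for the cross terms, contrary to your remark, but your termwise check avoids this and the paper works in characteristic zero anyway).
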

\begin{proof}
    Let $B_i(x,y)=R_i(x+y)-R_i(x)-R_i(y)$ be the symmetric bilinear form associated with the quadratic form $R_i$.  For any $t\in k$, we have that
    \begin{align*}
        R_i(x+ty)=tB_i(x,y)+R_i(x)+t^2R_i(y).
    \end{align*}
     The $i$th component of $J(x)\cdot y$ is
    $$
   (J(x)\cdot y)_i=\left.\frac{\mathrm d}{\mathrm dt}\right\vert_{t=0}R_i(x+ty)=B_i(x,y).
    $$
    It follows that for every $i=1,\ldots,20$, 
    \begin{align*}
        \left(J(u_1+u_2)\cdot (u_1-u_2)\right)_i=B_i(u_1+u_2,u_1-u_2)=2R_i(u_1)-2R_i(u_2)=0.
    \end{align*}
\end{proof}

The key observation is that $\bP(U)$ is an OADP variety:
\begin{lemm}
\label{lemm:secant}
   Through a general point in $\bP^{15}$, there passes a unique secant line of $\bP(U)$. 
\end{lemm}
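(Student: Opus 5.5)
We need to show that $\bP(U)\subset\bP^{15}$, which has dimension $7$ (the affine cone $U$ has dimension $\dim S+\mathrm{rk}\Pic(\bar S)=2+6=8$), has one apparent double point. Here $15=2\cdot 7+1$, so the dimension count matches the OADP setting of Theorem~\ref{thm:projection}. Since the property is geometric, we work over $\bar k$.

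The plan is to use Lemma~\ref{lemm:Jacobian} to turn the secant condition into a linear-algebra statement. Let $p=[w]\in\bP^{15}$ be general and suppose $p$ lies on the secant line through $[u_1],[u_2]$ with $u_1,u_2\in U$. After rescaling the $u_i$ (the $u_i$ are only defined up to scalars, and $U$ is a cone), we may write $w=u_1+u_2$. Lemma~\ref{lemm:Jacobian} then gives $u_1-u_2\in\ker J(w)$.

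Conversely, suppose $v\in\ker J(w)$ satisfies $R_i(w+v)=R_i(w-v)$ for all $i$. The identity in the proof of Lemma~\ref{lemm:Jacobian} shows $R_i(w\pm v)=R_i(w)\pm B_i(w,v)+R_i(v)$, and $B_i(w,v)=(J(w)v)_i=0$. So the points $u_{1,2}=(w\pm v)/2$ lie on $U$ exactly when $R_i(w)+R_i(v)=0$ for all $i$.

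This reduces the problem to two steps.

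\emph{Step 1 (kernel dimension).} For general $w\in\bA^{16}$ I will show that $J(w)$ has rank $14$, so that $\ker J(w)$ is $2$-dimensional. It is enough to check this at one explicit point, e.g.\ a point with small integer coordinates for specific $a,b$, and then use semicontinuity together with the connectedness of the parameter space. One notes that $w$ itself lies in the kernel only if $w\in U$, since $J(w)w=2R(w)$; for general $w$ this fails.

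\emph{Step 2 (the equations on the kernel).} On the $2$-dimensional kernel $K=\ker J(w)$, the conditions $R_i(v)=-R_i(w)$ restrict $R$ to the plane $K$. This gives $20$ quadratic forms in the two coordinates of $v$, whose values on $K$ span at most $3$ dimensions (the space of binary quadrics). I expect that for general $w$ the vector $R(w)$ lies in the image $\{-R(v):v\in K\}$, and that the solutions form a single pair $\pm v$. Geometrically, the map $K\to\bA^{20}$, $v\mapsto R(v)$, is a quotient by $\pm1$ onto a $2$-dimensional cone, and $-R(w)$ should lie on it with exactly one preimage pair. The pair $\pm v$ gives one unordered pair $\{u_1,u_2\}$, hence a unique secant line. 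Note that $v=0$ is excluded because $w\notin U$.

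The main obstacle is justifying existence and uniqueness in Step 2 for general $w$, rather than only at one point. Existence can be handled by a dimension count. The secant variety is all of $\bP^{15}$ because the map $U\times U\to\bA^{16}$, $(u_1,u_2)\mapsto u_1+u_2$, is dominant: I will check this by computing its differential at one explicit pair, where it is $(x,y)\mapsto x+y$ on $T_{u_1}U\oplus T_{u_2}U$, which must span. For uniqueness, the argument above shows that every secant through $p$ arises from a solution $v\in K$ of the quadratic system. So it suffices to exhibit one explicit $w=u_1+u_2$, built from two chosen points of $U$ over $\bQ(a,b)$ or with specific $a,b$, for which $\operatorname{rank}J(w)=14$ and the system on $K$ has only the solutions $\pm(u_1-u_2)/2$. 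Finiteness and the solution count are then constant on an open set by upper semicontinuity of fiber cardinality of the finite part of $U\times U\to\bA^{16}$. I expect this explicit verification to be a computer-assisted check.
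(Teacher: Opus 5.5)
\textbf{Genuine gap: Step~1 is false, and Step~2's transfer from one point to a general point does not work.}

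Your reduction is sound, and it is the one the paper uses. By Lemma~\ref{lemm:Jacobian}, $u_1-u_2\in\ker J(w)$ for any decomposition $w=u_1+u_2$, and existence of a secant through a general point follows from Terracini, as you say. The gap is Step~1: $\ker J(w)$ is generically $1$-dimensional, not $2$-dimensional.

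Take $t_1$ with $e_1=e_2=1$ and $t_2$ with $q=l_{12}=1$, all other coordinates $0$. Both lie on $U$, since $e_1e_2$ and $q\,l_{12}$ occur in no relation. Write $w=t_1+t_2$ and $v=(v_q,v_{e_1},\dots,v_{l_{45}})$. The forms $B_i(w,v)$ are as follows.
\begin{itemize}
\item From $R_1,R_3,R_{12},R_{14}$ they are $v_{l_{12}}+v_{e_2}$, $v_{l_{12}}+v_{e_1}$, $v_q+v_{e_1}$, $v_q+v_{e_2}$. These have rank $3$; $R_2,R_4$ repeat them and $R_{11},R_{13}$ give $0$.
\item The remaining six pairs $(R_5,R_6),(R_7,R_8),(R_9,R_{10}),(R_{15},R_{16}),(R_{17},R_{18}),(R_{19},R_{20})$ live in the disjoint variable pairs $(v_{l_{13}},v_{l_{23}})$, $(v_{l_{14}},v_{l_{24}})$, $(v_{l_{15}},v_{l_{25}})$, $(v_{l_{45}},v_{e_3})$, $(v_{l_{35}},v_{e_4})$, $(v_{l_{34}},v_{e_5})$. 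Each pair has rank $2$, using $a\neq 1$.
\end{itemize}
Hence $\operatorname{rank}J(w)=15$ and $\ker J(w)$ is spanned by $t_1-t_2$. By lower semicontinuity of rank, $\operatorname{rank}J(w)\ge 15$ for general $w$, so the rank-$14$ verification you plan cannot succeed.

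This is the missing idea, not a numerical slip. The condition $\dim\ker J(w)\le 1$ is open. For every $w\notin U$ in that open set, any two decompositions $w=u_1+u_2=u_1'+u_2'$ have proportional differences. Then $u_1',u_2'$ lie in the span of $w$ and $u_1-u_2$, which is the span of $u_1,u_2$, so the two secant lines coincide. Uniqueness thus holds pointwise on an open set, with no quadratic system on $\ker J(w)$ to solve; this is the paper's argument.

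Your Step~2, built for a $2$-dimensional kernel, also relies on an invalid transfer from one explicit point to a general one. For a generically finite map, a special fiber can have fewer points than the general fiber, for instance at ramification. So finding only $\pm(u_1-u_2)/2$ at one chosen $w$ does not bound the number of solutions at a general $w$. You would need a multiplicity-one (reducedness) statement for that fiber, which your plan does not address. With the correct kernel dimension this issue disappears, and your argument becomes the paper's.
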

\begin{proof}
  First, we show that the secant variety of $\bP(U)$ is the whole $\bP^{15}$. By Terracini’s Lemma (see, e.g., \cite[Theorem 1.1]{projection}), it suffices to show that for two general points $u_1,u_2\in U$, we have that
  \begin{align}\label{eqn:terracini}
      T_{u_1}(U)+T_{u_2}(U)=\bA^{16},
  \end{align}
   where $T_{u_i}(U)$ is the embedded tangent space of $U$ at $u_i$, $i=1,2.$ 
 Consider 
 $$
t_1=(0,1,1,0,\ldots,0),\quad t_2=(1,0,0,0,0,0,1,0,\ldots,0).
$$
A computation\footnote{See \cite{TZdp4-tables} for supporting materials.} of the Jacobian matrix $J$ defined in~\eqref{eqn:Jacobian}  shows that $t_1,t_2$ are smooth points of $U$ and $
  T_{t_1}(U)+T_{t_2}(U)=\bA^{16}.
  $
Since~\eqref{eqn:terracini} is an open condition, it holds for  two general points on $U$.

Let $x\in \bA^{16}$ be a general point. We now show that there is exactly one secant line of $\bP(U)$ passing through the image of $x$ in $\bP^{15}$.
 Assume that  two such lines $l_1$ and $l_2$ exist. We can find points $t_3,t_4,t_3',t_4'$ on $U$ such that
$
l_1=\bP(\langle t_3,t_4\rangle),$ $ l_2=\bP(\langle t_3', t_4'\rangle),
$ and $x=t_3+t_4=t_3'+t_4'$.
By Lemma~\ref{lemm:Jacobian}, we know that 
$$
J(x)\cdot (t_3-t_4)=J(x)\cdot(t_3'-t_4')=0.
$$
A direct computation shows that 
$
   \dim(\ker(J(t_1+t_2)))=1,
$
which implies that 
$
   \dim(\ker(J(x)))\leq 1,
$
since this is an open condition and $x$ is general.
It follows  that $t_3-t_4$ and $t_3'-t_4'$ are linearly dependent, i.e., $l_1=l_2.$
\end{proof}

\begin{rema}
In any concrete situation, one can directly check the OADP property of $\bP(U)$ and the birationality of the projection in Theorem~\ref{thm:projection}. An example of this computation is provided in Section~\ref{sect:C3C4}.
\end{rema}

\begin{theo}\label{thm:mainrepeat}
    Let $S$ be a quartic del Pezzo surface over $k$ and $\cT\to S$ a universal torsor over $S$. If $\cT(k)\neq \varnothing$, then  $\cT$ is rational over $k$. 
\end{theo}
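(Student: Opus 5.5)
We reduce to Theorem~\ref{thm:projection}, applied to the projectivized Cox variety over $k$. Since $\cT(k)\neq\varnothing$, $S(k)\neq\varnothing$, and $\cT$ is a $k$-form of the geometric universal torsor. By the results of \cite{CTSansucDuke} (see also \cite{derenthal-non}), $\cT$ embeds $\mathfrak g$-equivariantly as a dense open subset of a $k$-form $U$ of the affine variety $\Spec(\Cox(\bar S))\subset\bA^{16}$. Concretely, the Galois action permutes the 16 exceptional curves, and hence the generators $q,e_i,l_{ij}$ up to scalars. The ideal generated by $R_1,\dots,R_{20}$ is then defined over $k$ after twisting the linear span of the generators, which is a $k$-form $V$ of $\bA^{16}$. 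We also need $V$ to be an honest vector space over $k$ (a trivial Brauer obstruction); this holds because $\cT$ has a $k$-point and the torsor structure trivializes the relevant twist. Since $\cT$ is stable under scalar multiplication by the diagonal $\mathbb G_m\subset T$ (the grading by the anticanonical degree, under which all the generators have degree $1$), the cone $U$ is defined over $k$. Its projectivization $X:=\bP(U)\subset\bP(V)\simeq\bP^{15}$ is a $7$-dimensional variety: $\dim U=2+\mathrm{rk}\,\Pic=2+6=8$. Thus $X$ sits in $\bP^{2\cdot 7+1}$, which is exactly the numerology of Theorem~\ref{thm:projection}.

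Next we verify the hypotheses of Theorem~\ref{thm:projection}. Geometric irreducibility and reducedness hold because $\Cox(\bar S)$ is an integral domain. Nondegeneracy holds because the 16 generators are linearly independent in degree one. By Lemma~\ref{lemm:secant}, $\bar X$ has one apparent double point; that lemma was proved for a specific choice of $a,b$, but the computation is an open condition and the family of quartic del Pezzo surfaces is irreducible, so it should extend to all parameters. If not, we would redo the Jacobian-kernel computation symbolically in $a,b$. By Lemma~\ref{lemm:dense}, $\cT(k)$ is Zariski dense in $\cT$, hence in $U$, and its image in $X$ is Zariski dense in $X$. Theorem~\ref{thm:projection} then shows that projection from the tangent space at a general $k$-point gives a birational map $X\dashrightarrow\bP^7$ over $k$. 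So $X$ is $k$-rational.

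Finally, we pass from $X$ to $\cT$. The variety $U\setminus\{0\}\to X$ is a $\mathbb G_m$-torsor, namely the tautological line bundle minus its zero section restricted to $X$. By Hilbert's Theorem 90 it is Zariski locally trivial, so $U$ is $k$-birational to $X\times\bA^1$. Since $\cT$ is dense open in $U$, $\cT$ is $k$-birational to $\bP^7\times\bA^1$, hence $k$-rational.

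The main obstacle is the first step: producing the $k$-form of the Cox ring embedding and showing that the ambient $\bP^{15}$ is an honest projective space over $k$, rather than a Severi--Brauer variety. The point to pin down is that a $k$-point of $\cT$ gives a $k$-point of $X$, namely a line in $V$, and so splits the twist.
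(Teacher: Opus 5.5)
Your proposal is correct and follows the same route as the paper: descend the Cox-ring embedding $\cT\subset U\subset\bA^{16}$ to $k$, check that $\bP(U)\subset\bP^{15}$ is a reduced, irreducible, nondegenerate $7$-fold with one apparent double point (Lemma~\ref{lemm:secant}) and Zariski dense $k$-points (Lemma~\ref{lemm:dense}), and apply Theorem~\ref{thm:projection}; your explicit $\mathbb{G}_m$-cone argument just spells out the paper's ``and thus of $\cT$''. Two small corrections: Lemma~\ref{lemm:secant} is already stated for all admissible $a,b$ (the test points $t_1,t_2$ lie on $U$ for every $a,b$), so no extension argument is needed, and ``open condition plus irreducible family'' would only give general $a,b$ anyway; and there is no Severi--Brauer issue, since the ambient space is the affine space on the descended $16$-dimensional span of the generators, and $k$-forms of vector spaces are always trivial by Hilbert's Theorem~90.
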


\begin{proof}
By Lemma~\ref{lemm:dense}, $\cT(k)$ is Zariski dense in $\cT$.
The $\mathfrak g$-equivariant structures
\begin{align}\label{eqn:torsorembed}
    \bar{\cT}\hookrightarrow \bar{U}:=\Spec(\Cox(\bar{S}))\subset \bA^{16}
\end{align}
descend to $k$, i.e., we have 
an embedding
\begin{align}\label{eqn:torsordescend}
\cT\hookrightarrow U\subset \bA^{16},
\end{align}
over $k$, such that~\eqref{eqn:torsorembed} is the base change of~\eqref{eqn:torsordescend} to $\bar k$, see, e.g., \cite[Remark 4.5]{derenthal-non}.
On passage to $\bar{k}$, the universal torsor $\bar\cT$ with the embedding~\eqref{eqn:torsorembed} is linearly isomorphic to the embedding of the universal torsor constructed in~\eqref{eqn:coxring}. Geometrically, $\bar{\cT}$ is a Zariski dense open subset of a cone over a variety $\bP(\bar{U})$ which 
has one apparent double point by Lemma~\ref{lemm:secant}. Note that $\bP(\bar{U})$ is reduced, irreducible and nondegenerate, since $\Cox(\bar S)$ is an integral domain and there are no linear forms in the defining ideal. 
By Theorem~\ref{thm:projection}, projection from the tangent space of a general $k$-point in $\bP(U)$ yields $k$-rationality of $\bP(U)$ and thus of $\cT$. 
\end{proof}

\begin{coro}\label{coro:rattorsor}
    Let $S$ be a quartic del Pezzo surface over $k$ such that $S(k)\ne \varnothing$. Then there exists a $k$-rational universal torsor $\cT\to S$.
\end{coro}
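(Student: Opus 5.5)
The plan is to produce a universal torsor with a $k$-point and then apply Theorem~\ref{thm:mainrepeat}. Fix $s\in S(k)$. Since $S(k)\neq\varnothing$, universal torsors over $S$ exist by \cite[Remark 2.2.9]{CTSansucDuke}; choose one, $\cT_0\to S$. Its fiber $\cT_{0,s}$ over $s$ is a $T$-torsor over $\Spec(k)$, with class $\alpha:=[\cT_{0,s}]\in\rH^1(\Spec(k),T)$.

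The next step is to twist $\cT_0$ so that this fiber becomes split. Regard $\alpha$ as a $T$-torsor $P$ over $k$ and set $\cT:=\cT_0\times^T P^{-1}$, the contracted product with the torsor of class $-\alpha$. Then $[\cT]=[\cT_0]-\pi^*\alpha$ in $\rH^1(S,T)$, where $\pi:S\to\Spec(k)$ is the structure map. The type map $\chi:\rH^1(S,T)\to\Hom_{\mathfrak g}(\mathfrak X^*(\bar T),\Pic(\bar S))$ kills classes pulled back from $\Spec(k)$, because such torsors become trivial over $\bar k$. Hence $\chi([\cT])=\chi([\cT_0])=\mathrm{Id}$, so $\cT$ is again a universal torsor. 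This is the standard fact from \cite{CTSansucDuke} that universal torsors form a principal homogeneous space under $\rH^1(k,T)$ via this twisting. Its fiber over $s$ has class $\alpha-\alpha=0$, so $\cT_s\simeq T$ and $\cT$ has a $k$-point lying over $s$, for instance the one corresponding to the identity of $T$.

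With $\cT(k)\neq\varnothing$, Theorem~\ref{thm:mainrepeat} shows that $\cT$ is $k$-rational, and the corollary follows. The only step needing care is justifying that twisting by a constant class preserves universality and changes the fiber class by exactly $-\alpha$. This is the exact sequence
$$
\rH^1(k,T)\to\rH^1(S,T)\to\Hom_{\mathfrak g}(\mathfrak X^*(\bar T),\Pic(\bar S)),
$$
which holds because $\bar k[S]^\times=\bar k^\times$. Restricting to the section $s$ splits off the first term. I do not expect a real obstacle here.
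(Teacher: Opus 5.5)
Your proposal is correct and follows the paper's route. The paper also reduces to Theorem~\ref{thm:mainrepeat}, citing \cite[Corollary 2.3.4]{CTSansucDuke} for a universal torsor whose fiber over a point $s\in S(k)$ is trivial, and your twist of $\cT_0$ by $-[\cT_{0,s}]\in\rH^1(k,T)$ proves exactly that fact directly; the paper's choice of $s$ off the exceptional curves is not needed for this step.
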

\begin{proof}
    By Theorem~\ref{thm:mainrepeat}, it suffices to show the existence of a universal torsor $\cT\to S$ with a rational point. Recall that $S(k)\ne \varnothing$ implies that we can find $s\in S(k)$ in the complement to exceptional curves. By \cite[Corollary 2.3.4]{CTSansucDuke}, there exists a universal torsor $\cT$ over $S$ whose fiber over $s$ is a trivial torsor, and thus $\cT(k)\ne \varnothing$.
\end{proof}

\section{Stable rationality of quartic del Pezzo surfaces}
\label{sect:stab-rat}

We recall basic facts about the Picard group of a quartic del Pezzo surface $S$, see \cite{manin-book} and~\cite{KSS}. The anticanonical model of $S$ is a smooth intersection of two quadrics 
$$
S=Q_1\cap Q_2\subset \bP^4.
$$
The pencil of quadrics generated by $Q_1$ and $Q_2$ has 5 singular members, over $\bar{k}$. Each of them is a cone over $\bP^1\times\bP^1$, whose two rulings give rise to two conic bundle structures on $\bar{S}$. Choosing an appropriate contraction $\bar{S}\to \bP^2$ and labeling the 16 exceptional curves in $\bar{S}$ by $Q, E_i, L_{ij}$ as in Section~\ref{sect:rat},  the classes of 10 rulings in $\Pic(\bar{S})$ are
\begin{align}
    \label{eqn:rulings}
    F_i:=H-E_i,\quad F_i'=2H+E_i-\sum_{j=1}^5E_j,\quad i=1,\ldots,5,
\end{align}
where $H$ is the pullback of a general line on $\bP^2$. Each pair $\{F_i,F_i'\}$ consists of the two rulings of the same singular quadric.

The $\mathfrak g$-action on $\Pic(\bar{S})$ factors through a subgroup of the Weyl group 
$$
W(\sD_5)\simeq \fC_2^4\rtimes\fS_5,
$$
where $\fS_5$ permutes the 5 singular members of the pencil, and $\fC_2^4$ is generated by elements simultaneously switching the rulings of the $i$th and $j$th singular members, denoted by $c_ic_j$ . 

Choose a bijection between the 16 exceptional curves, and the 16 odd subsets of $\{1,\ldots,5\}$ given by
$$
Q\leftrightarrow\{1,\ldots,5\},\quad L_{ij}\leftrightarrow\{1,\ldots,5\}\setminus\{i,j\},\quad E_i\leftrightarrow\{i\}.
$$
The $W(\sD_5)$-action on exceptional curves can be described in terms of its action on odd subsets of $\{1,\ldots,5\}$ as follows: $\fS_5$ permutes the 5 indices, and $c_ic_j$ changes an odd subset $I$ to $J$ where for any $r\in\{1,\ldots,5\}$,
\begin{itemize}
    \item if $r\in I$, then $r\in J$ if and only if $r\notin \{i,j\}$,
    \item if $r\notin I$, then $r\in J$ if and only if $r\in \{i,j\}$.
\end{itemize}

Here is an alternative way to identify the image of $\mathfrak g$ in $W(\sD_5)$, using the blowup $\tilde{S}\to S$ of a general point $s\in S(k)$ (this will be used in Section~\ref{sect:cub}). Note that $\tilde{S}$ is a cubic surface with a line $l$ above $s$. We denote the class of $l$ in the geometric Picard group of $\tilde{S}$
by $E_6$. The residual conic bundle 
\begin{align}
    \label{eqn:cubconicbundle}
    \varphi:\tilde{S}\to\bP^1
\end{align}
has, geometrically, 5 singular fibers, each consisting of two distinct lines. Their components are the 10 lines intersecting $l$. The $i$th singular fiber consists of two lines whose classes are
 $$
 H-E_i-E_6,\quad 2H+E_i-\sum_{j=1}^6E_j,\quad i=1,\ldots,5.
 $$
Their images in $S$ are exactly the 10 rulings in~\eqref{eqn:rulings}. Therefore, the image of $\mathfrak g\to W(\sD_5)$ is also determined by the permutation of 5 singular fibers  of $\varphi$ and an even number of switches of their components.

In \cite{Maninperfect},  Manin shows 
that for a stably rational surface $S$ over a field $k$ of characteristic zero, the $\mathfrak g$-module $\Pic(\bar{S})$ is stably permutation. We are interested in the converse.

\begin{prop}[{\cite{KSS}, \cite{TY}}]\label{prop:h1}
    If a quartic del Pezzo surface $S$ is $k$-minimal, and satisfies Condition {\bf (H1)} of \eqref{eqn:h1}, i.e., 
    $$
    \rH^1(\mathfrak g_{k'},\Pic(\bar{S}))=0,\quad \text{for all finite } k'/k,
    $$
   then the image of $\mathfrak g_{k}$ in $W(\sD_5)$ is conjugate to one of the following:
   \begin{itemize}
       \item[$(I_0):$] $\fS_3=\langle (3,4,5),c_1c_3c_4c_5(4,5)\rangle$,
         \item[$(I_1):$] $\fC_2\times\fS_3=\langle c_2c_3(4,5),c_1c_2c_3c_4(3,4,5)\rangle$,
           \item[$(I_2):$] $\fC_3\rtimes\fC_4=\langle (1,2,3),c_1c_2c_3c_4(2,3)(4,5)\rangle$,
             \item[$(I_3):$] $\fC_3\rtimes\fD_4=\langle c_1c_2c_3c_5(2,5),c_3c_4(3,4)(1,5,2)\rangle$.
   \end{itemize}
   \end{prop}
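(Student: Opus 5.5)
This is a finite group-theoretic classification inside $W(\sD_5)\simeq \fC_2^4\rtimes\fS_5$ (order $1920$), so my plan is to reduce it to a computation over conjugacy classes of subgroups. Let $G\subseteq W(\sD_5)$ be the image of $\mathfrak g_k$. The action of $\mathfrak g_k$ on $\Pic(\bar S)$ factors through $G$. For a finite extension $k'/k$, the image of $\mathfrak g_{k'}$ runs over all subgroups $G'\subseteq G$; indeed every subgroup arises, by taking $k'$ to be the fixed field of its preimage in $\mathfrak g_k$. Inflation--restriction, together with the fact that $\Pic(\bar S)$ is torsion-free, gives $\rH^1(\mathfrak g_{k'},\Pic(\bar S))=\rH^1(G',\Pic(\bar S))$. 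Hence Condition {\bf (H1)} becomes the purely group-theoretic condition
$$
\rH^1(G',\Pic(\bar S))=0 \quad\text{for all subgroups } G'\subseteq G.
$$
Similarly, $k$-minimality becomes a condition on $G$: there is no $G$-orbit of pairwise disjoint exceptional curves among the 16. Such an orbit would be a set of exceptional curves defined over $k$ that could be contracted.

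The key steps, in order, are as follows.

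\begin{enumerate}
\item Enumerate the conjugacy classes of subgroups of $W(\sD_5)$, realized in $\mathrm{GL}_6(\bZ)$ acting on $\Pic(\bar S)=\bZ H\oplus\bigoplus\bZ E_i$ via the explicit description above: $\fS_5$ permutes indices, and $c_ic_j$ acts on the odd subsets.
\item Discard the classes having an orbit of pairwise disjoint exceptional curves. Two exceptional curves are disjoint exactly when their odd subsets meet in a controlled way, so this is a combinatorial check on the 16 odd subsets.
\item For each surviving $G$, compute $\rH^1(G',\Pic(\bar S))$ for every $G'\subseteq G$. It suffices to consider cyclic subgroups and Sylow-relevant subgroups, since $\rH^1(G',\mathsf M)=\Hom(G',\ldots)$-type obstructions are already detected on small subgroups. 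For cyclic $G'=\langle\sigma\rangle$ one has $\rH^1=\ker(N)/\mathrm{im}(1-\sigma)$, and this is easily computed.
\item Identify the survivors up to conjugacy with the four listed groups $I_0,\dots,I_3$. I would do this by checking orders ($6,12,12,24$), isomorphism types, and the images in $\fS_5$ of the given generators.
\end{enumerate}

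The main obstacle is the sheer size of the subgroup lattice, which has on the order of 197 conjugacy classes. By hand, I would prune it with two observations. First, any $G$ containing an element whose $\rH^1$ on a cyclic subgroup is nonzero fails. The classical elements with nonzero cyclic $\rH^1$ are the ones yielding $\bZ/2$ classes, which appear in the Kunyavskii--Skorobogatov--Tsfasman list. Second, $k$-minimality forces $G$ to act without fixed exceptional curves and with a transitive-enough action on the pencil's singular members. In practice I would cite the complete tables of \cite{KSS}, namely Theorems 4.19 and 5.20, which list minimal types together with their $\rH^1$. I would then verify, possibly by computer (as in \cite{TY}), that exactly the four conjugacy classes above satisfy the vanishing for all subgroups, and confirm by direct computation that the given generators generate groups of the stated isomorphism types.
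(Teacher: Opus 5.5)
The paper gives no independent proof of this statement. It cites \cite[Theorems 4.19 and 5.20]{KSS} and \cite{TY}, and asserts without proof that its presentations of $I_1,I_2,I_3$ are conjugate in $W(\sD_5)$ to those of \cite{KSS}. Your fallback is the same citation, and your translations are correct: (H1) becomes $\rH^1(G',\Pic(\bar S))=0$ for all subgroups $G'\subseteq G$, and $k$-minimality becomes the absence of a $G$-orbit of pairwise disjoint exceptional curves (two curves are disjoint exactly when their odd subsets have symmetric difference of size $2$).

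\textbf{The gap is in your step 4.} This is precisely the part the citation does not cover, and order, isomorphism type and image in $\fS_5$ do not determine a conjugacy class of subgroups of $W(\sD_5)$. Compare $\langle(3,4,5),(4,5)\rangle\subset\fS_5\subset W(\sD_5)$ with $I_0=\langle(3,4,5),c_1c_3c_4c_5(4,5)\rangle$. Both are isomorphic to $\fS_3$ and have the same image in $\fS_5$. The first fixes $E_1$, so it is not minimal. The orbits of $I_0$ on the exceptional curves are
$\{E_1,L_{12}\}$, $\{E_2,Q\}$, $\{E_3,E_4,E_5,L_{23},L_{24},L_{25}\}$ and $\{L_{13},L_{14},L_{15},L_{34},L_{35},L_{45}\}$.
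Each of these contains two intersecting curves, so $I_0$ is minimal and the two groups are not conjugate. To close the gap, either exhibit conjugating elements, or verify directly that each listed group is minimal and satisfies (H1). The second option suffices because the four classes singled out in \cite{KSS} have pairwise non-isomorphic groups $\fS_3$, $\fC_2\times\fS_3$, $\fC_3\rtimes\fC_4$, $\fC_3\rtimes\fD_4$.

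\textbf{Your pruning heuristics in step 3 are also unreliable.} Vanishing of $\rH^1$ on cyclic subgroups does not in general imply vanishing on all subgroups. For example, the Picard lattice of a smooth compactification of the norm-one torus of a biquadratic extension is flasque, so $\rH^1$ vanishes on every cyclic subgroup. Yet $\rH^1$ of the full $\fC_2\times\fC_2$ is $\bZ/2$. The correct reduction is by restriction and corestriction: $\rH^1(G',\mathsf M)$ injects into the sum of $\rH^1(P,\mathsf M)$ over Sylow subgroups $P$ of $G'$. Hence (H1) holds if and only if $\rH^1(P,\Pic(\bar S))=0$ for every $p$-subgroup $P\subseteq G$.

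Likewise, minimality does not force transitivity on the five singular members of the pencil. The image of $I_0$ in $\fS_5$ fixes the indices $1$ and $2$, so pruning on that basis would discard $I_0$.
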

Our presentation of $I_1, I_2$, and $I_3$ is different from that in \cite[Theorem 5.20]{KSS}, but the resulting subgroups are conjugate  in $W(\sD_5).$

\begin{lemm}\label{lemm:stablyperm}
    Let $S$ be a quartic del Pezzo surface over $k$. Then 
    the $\mathfrak g$-module 
    $\Pic(\bar{S})$ is stably permutation if and only if Condition {\bf (H1)} of  \eqref{eqn:h1} is satisfied.
\end{lemm}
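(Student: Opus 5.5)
One direction is formal. If $\Pic(\bar S)\oplus \mathsf P\simeq \mathsf P'$ with $\mathsf P,\mathsf P'$ permutation $\mathfrak g$-modules, then for every finite $k'/k$ the restriction to the open subgroup $\mathfrak g_{k'}$ is still stably permutation. Shapiro's lemma, as recalled in Section~\ref{sect:back}, then gives $\rH^1(\mathfrak g_{k'},\Pic(\bar S))=0$, so Condition \textbf{(H1)} holds.

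For the converse I will reduce to a finite group computation. The action of $\mathfrak g$ on $\Pic(\bar S)$ factors through a finite quotient $G\subseteq W(\sD_5)$. Being stably permutation as a $\mathfrak g$-module is equivalent to being stably permutation as a $G$-module, since permutation $\mathfrak g$-modules on which the kernel acts trivially are exactly permutation $G$-modules, and one can always restrict to such summands. Condition \textbf{(H1)} over all finite $k'/k$ gives exactly $\rH^1(G',\Pic(\bar S))=0$ for all subgroups $G'\subseteq G$, since every subgroup of $G$ is the image of some $\mathfrak g_{k'}$.

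Next I split according to minimality. If $S$ is not $k$-minimal, I contract a Galois-stable set of disjoint exceptional curves to get $S\to S'$. Then $\Pic(\bar S)\simeq \Pic(\bar S')\oplus \mathsf P$, where $\mathsf P$ is the permutation module on the contracted curves. The surface $S'$ is a del Pezzo surface of degree $\ge 5$, or $\bP^2$, or a quadric, and its Picard module is known to be stably permutation (Manin, Voskresenskii). More precisely, these are rational or stably rational cases, and their Picard lattices are of the well-known forms: a permutation module for degrees $\ge 6$ and for the quadric, and a module satisfying $\Pic\oplus\bZ\simeq$ permutation in degree 5. Then $\Pic(\bar S)$ is stably permutation as well.

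One caveat in this case: $S'$ could instead be a minimal conic bundle of degree 4, which is still a quartic del Pezzo surface. That falls back into the minimal case below, where conic bundle minimal surfaces are included in the classification of Proposition~\ref{prop:h1}, e.g.\ $I_0$.

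If $S$ is $k$-minimal, Proposition~\ref{prop:h1} says that $G$ is conjugate to one of $I_0,\dots,I_3$. In each of the four cases I will exhibit an explicit isomorphism $\Pic(\bar S)\oplus \mathsf P\simeq \mathsf P'$. The natural candidates come from $G$-orbits on exceptional curves and on the $10$ conic classes $F_i,F_i'$, together with $\bZ K_S$. Concretely, I will take $\mathsf P'$ to be permutation modules on orbits of exceptional curves or rulings and write down surjections $\mathsf P'\to\Pic(\bar S)$ whose kernels are permutation. For $I_0$ this is the computation of \cite{bctspstably}. For $I_1,I_2,I_3$, which have orders $12$, $12$ and $24$, I will check the relevant module decompositions by machine, as in \cite{KSS}, \cite{TY}. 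Alternatively, I can invoke the known fact that for these groups $\rH^1$-triviality together with vanishing of $\rH^1$ of the flasque resolution implies stable permutation. For groups with cyclic Sylow subgroups this is Endo--Miyata. Here, though, the Sylow $2$-subgroups of $I_1$ and $I_3$ are not cyclic, so explicit construction is needed.

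The main obstacle is precisely this last step for $I_1$ and $I_3$. Vanishing of $\rH^1$ alone does not formally give stable permutation, so I will construct the permutation complements explicitly and verify the isomorphism by computer, or cite \cite{TY} where this is done.
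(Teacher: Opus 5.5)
Your outline matches the paper's. Necessity comes from Shapiro's lemma. The non-$k$-minimal case follows from the classical results on del Pezzo surfaces of degree $\ge 5$ (the paper simply cites \cite{manin-book}). The $k$-minimal case goes through the classification in Proposition~\ref{prop:h1}, with $I_0$ taken from \cite{bctspstably}.

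The genuine gap is that the verification for $I_1,I_2,I_3$ is announced (``by machine'', or via \cite{TY}) but never carried out. This verification is the substantive content of the lemma, and you note yourself that Endo--Miyata does not settle it. You also miss the reduction that turns it into a single computation. Up to conjugacy in $W(\sD_5)$, the groups $I_0,I_1,I_2$ are subgroups of $I_3$. An isomorphism $\Pic(\bar S)\oplus\mathsf P\simeq\mathsf P'$ of $I_3$-modules, with $\mathsf P,\mathsf P'$ permutation, restricts to such an isomorphism for every subgroup. The paper therefore treats only $I_3=\langle g_1,g_2\rangle$. It adjoins $\mathsf P=\bZ[w_0,w_1,w_2]\oplus\bZ[q_0,q_1]$, where $I_3$ acts on $\{w_0,w_1,w_2\}$ through $\fS_3$, and $g_1$ swaps $q_0,q_1$ while $g_2$ fixes them. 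It then exhibits an explicit $\bZ$-basis $b_1,\dots,b_{11}$ of $\Pic(\bar S)\oplus\mathsf P$ that $g_1,g_2$ permute, with orbits of sizes $6,4,1$.

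There are also three smaller points.
\begin{enumerate}
\item Your suggested shape of the construction is surjections $\mathsf P'\to\Pic(\bar S)$ with permutation kernel $\mathsf P$. This still requires $0\to\mathsf P\to\mathsf P'\to\Pic(\bar S)\to 0$ to split. It does split under (H1): the unimodular intersection form gives $\Pic(\bar S)\simeq\Pic(\bar S)^\vee$, hence $\mathrm{Ext}^1_G(\Pic(\bar S),\bZ[G/H])\simeq\rH^1(H,\Pic(\bar S))=0$. But this has to be said, and finding such surjections is exactly what remains open in your plan.
\item Your caveat in the non-minimal case is vacuous. Contracting a nonempty Galois-stable set of disjoint $(-1)$-curves raises the degree, so $S'$ is never quartic. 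Conic-bundle surfaces of type $I_0$ are simply $k$-minimal.
\item Picard lattices of sextic del Pezzo surfaces are stably permutation but not always permutation. For $\fC_6$ rotating the hexagon of lines, a rank-$4$ permutation module with rank-$1$ invariants would need a $\fC_6$-orbit of size $4$. This is harmless, since only stable permutation is used.
\end{enumerate}
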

   \begin{proof}
      If $S$ is not $k$-minimal, the assertion follows from 
      \cite{manin-book}. Thus we may assume that $S$ is $k$-minimal. It suffices to show that $\Pic(\bar{S})$ is stably permutation, for the four types in Proposition~\ref{prop:h1}. The type $I_0$ is addressed in \cite{bctspstably}. Up to conjugation in $W(\sD_5)$, the groups of type $I_0$, $I_1$ and $I_2$ are subgroups of the group of type $I_3$. Thus we only need to consider this last type. 
      Let 
      $$
      g_1=c_1c_2c_3c_5(2,5), \quad g_2=c_3c_4(3,4)(1,5,2).
      $$
      In the standard basis $H, E_1,\ldots,E_5$ of  $\Pic(\bar{S})$, they act by matrices
{\small
$$
g_1=
\begin{pmatrix}
 3& 1& 1& 1& 2& 1\\
-1&-1& 0& 0&-1& 0\\
-1& 0& 0& 0&-1&-1\\
-1& 0& 0&-1&-1& 0\\
-2&-1&-1&-1&-1&-1\\
-1& 0&-1& 0&-1& 0
\end{pmatrix},
\quad
g_2=
\begin{pmatrix}
 2& 1& 1& 0& 0& 1\\
-1&-1& 0& 0& 0&-1\\
-1&-1&-1& 0& 0& 0\\
 0& 0& 0& 1& 0& 0\\
 0& 0& 0& 0& 1& 0\\
-1& 0&-1& 0& 0&-1
\end{pmatrix}.
$$}
In our convention, the matrices act from the right, e.g.,  
$$
g_2(H)=2H-E_1-E_2-E_5.
$$
We introduce new variables $w_0,w_1,w_2,q_0,q_1$ with $\mathfrak g$-permutation actions generated by 
$$
g_1(w_0)=w_0,\quad g_1(w_1)=w_2,\quad g_1(w_2)=w_1,\quad g_1(q_0)=q_1,\quad g_1(q_1)=q_0,
$$
$$
g_2(w_0)=w_2,\quad g_2(w_1)=w_0,\quad g_2(w_2)=w_1,\quad g_2(q_0)=q_0,\quad g_2(q_1)=q_1.
$$
One can check that 
$$
\Pic(\bar{S})\oplus\bZ[w_0,w_1,w_2,q_0,q_1]
$$ 
is a permutation module. Indeed, in the basis
\begin{align*}
b_1={}&
  2H-E_2-E_3-E_4-E_5-w_0-w_2-2q_0-q_1,\\
b_2={}&
  H-E_1-w_0-w_1-q_0-2q_1,\\
b_3={}&
  2H-E_1-E_3-E_4-E_5-w_0-w_1-2q_0-q_1,\\
b_4={}&
  H-E_2-w_1-w_2-q_0-2q_1,\\
b_5={}&
  H-E_5-w_0-w_2-q_0-2q_1,\\
b_6={}&
  2H-E_1-E_2-E_3-E_4-w_1-w_2-2q_0-q_1,\\
b_7={}&
  -3H+E_1+E_2+2E_3+E_4+E_5
  +w_0+w_1+w_2+3q_0+q_1,\\
b_8={}&
  -2H+E_1+E_2+E_5
  +w_0+w_1+w_2+q_0+3q_1,\\
b_9={}&
  -H+w_0+w_1+w_2+q_0+3q_1,\\
b_{10}={}&
  -3H+E_1+E_2+E_3+2E_4+E_5
  +w_0+w_1+w_2+3q_0+q_1,\\
b_{11}={}&
  3H-E_1-E_2-E_3-E_4-E_5
  -w_0-w_1-w_2-2q_0-2q_1,
\end{align*}
the permutation action of $\mathfrak g$ is visible by expressing
$$
g_1=(b_1,b_2)(b_3,b_5)(b_4,b_6)
  (b_7,b_8)(b_9,b_{10}),
  $$
  $$
  g_2=(b_1,b_6,b_3)(b_2,b_5,b_4)(b_8,b_9).
$$
\end{proof}
\begin{coro}\label{coro:dp4stab}
    Let $S$ be a quartic del Pezzo surface over a field $k$ of characteristic zero such that $S(k)\ne \varnothing$. Then $S$ is stably rational over $k$ if and only if one of the following equivalent conditions holds:
    \begin{enumerate}
           \item $\Pic(\bar{S})$ is a stably permutation $\mathfrak g$-module,
           \item $S$ satisfies Condition {\bf(H1)} of (1.1),
\item $S$ is not $k$-minimal or the $\mathfrak g$-action on $\Pic(\bar{S})$ factors through one of the four subgroups in Proposition~\ref{prop:h1}.
     \end{enumerate}
\end{coro}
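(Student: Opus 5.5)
The plan is to prove the cycle of implications stable rationality $\Rightarrow$ (1) $\Rightarrow$ (2) $\Rightarrow$ (3) $\Rightarrow$ (1) $\Rightarrow$ stable rationality, under the standing hypothesis $S(k)\neq\varnothing$.

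\emph{From stable rationality to (1) and (2).} If $S$ is stably rational over $k$, then by Manin's theorem \cite{Maninperfect} the $\mathfrak g$-module $\Pic(\bar S)$ is stably permutation, which is (1). Then (1) $\Rightarrow$ (2): for a finite extension $k'/k$, $\mathfrak g_{k'}$ is an open subgroup of $\mathfrak g$. By Shapiro's lemma, as recalled in Section~\ref{sect:back}, a stably permutation module has $\rH^1(H,\cdot)=0$ for every open $H$. This is exactly Condition {\bf (H1)}.

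\emph{Equivalence of (2) and (3).} For (2) $\Rightarrow$ (3), suppose $S$ is $k$-minimal. Then Proposition~\ref{prop:h1} shows that the image of $\mathfrak g$ in $W(\sD_5)$ is conjugate to one of $I_0,\dots,I_3$, which is (3). For (3) $\Rightarrow$ (1), there are two cases.
\begin{itemize}
\item If $S$ is not $k$-minimal, then $S$ contracts over $k$ to a del Pezzo surface of higher degree or to a conic bundle. With $S(k)\neq\varnothing$, the classical results in \cite{manin-book} give that $S$ is either $k$-rational or has permutation-type Picard module. Here I rely on the case already invoked in the proof of Lemma~\ref{lemm:stablyperm}.
\item If the action factors through one of the four groups, then each is conjugate to a subgroup of $I_3$. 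The explicit basis $b_1,\dots,b_{11}$ in the proof of Lemma~\ref{lemm:stablyperm} shows that $\Pic(\bar S)\oplus\bZ[w_0,w_1,w_2,q_0,q_1]$ is permutation for $I_3$. Restricting to a subgroup preserves being permutation, so (1) holds.
\end{itemize}
Alternatively, one can simply cite Lemma~\ref{lemm:stablyperm}, which packages (1) $\Leftrightarrow$ (2) directly.

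\emph{From (1) to stable rationality.} This is the substantive direction, and it follows the strategy of \cite{bctspstably}. Since $S(k)\neq\varnothing$, Corollary~\ref{coro:rattorsor} provides a universal torsor $\cT\to S$ that is $k$-rational. It rests on Theorem~\ref{thm:mainrepeat} together with the existence of a torsor with trivial fiber over a point off the exceptional curves. With $\Pic(\bar S)$ stably permutation, Proposition~\ref{prop:bctsmain} applies to $X=S$, which is smooth, projective, geometrically rational, and geometrically integral, and yields stable rationality of $S$ over $k$.

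The main obstacle is only in the non-minimal case of (3) $\Rightarrow$ (1). There one must make sure that non-minimality together with a rational point actually gives stable permutation-ness. I would handle this by descending along the $\mathfrak g$-stable contraction. Degree $\ge5$ del Pezzo surfaces with a point are rational, and a rational surface has a stably permutation Picard module by Manin. For a minimal conic bundle of degree $4$ with a $k$-point, its Galois image lies among the cases of Proposition~\ref{prop:h1}, or (H1) fails. All the other implications are formal consequences of earlier results.
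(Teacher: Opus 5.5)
Your proposal is correct and follows essentially the same route as the paper. The paper gets the equivalence of (1)--(3) from Proposition~\ref{prop:h1} and Lemma~\ref{lemm:stablyperm}, the implication (1) $\Rightarrow$ stable rationality from Corollary~\ref{coro:rattorsor} and Proposition~\ref{prop:bctsmain}, and the converse from \cite{Maninperfect}; you simply unpack the lemma into Shapiro's lemma plus restriction of the explicit $I_3$ permutation basis.

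The conic-bundle aside in your non-minimal case is unnecessary and slightly off. A non-$k$-minimal quartic del Pezzo surface always contracts over $k$ to a del Pezzo surface of degree $\ge 5$, and that surface is $k$-rational once it has a $k$-point. Also, the phrase ``or (H1) fails'' has no place in an argument for (3) $\Rightarrow$ (1).
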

\begin{proof}
    The equivalence of the conditions follows from Proposition~\ref{prop:h1} and Lemma~\ref{lemm:stablyperm}. Corollary~\ref{coro:rattorsor} and Proposition~\ref{prop:bctsmain} show that (1) implies the stable rationality of $S$. The converse is in \cite{Maninperfect}.
\end{proof}

\begin{rema}
    Examples of quartic del Pezzo surfaces over $\bQ$ of type $I_1, I_2$, or $I_3$ are given in \cite[Theorem 6.15]{KSS}; 
    Corollary~\ref{coro:dp4stab} yields new  
    stably rational but nonrational surfaces over nonclosed fields. 
\end{rema}

 \subsection*{Levels of stable rationality}
Let $S$ be a stably rational but nonrational quartic del Pezzo surface. What is the smallest integer $r$ such that $S\times\bA^r$ is rational?  
    Our construction of $\Pic(\bar{S})$ as a stably permutation module in Lemma~\ref{lemm:stablyperm} gives a uniform upper bound: $S\times\bA^{11}$ is rational. One can check that 5 and 11 are the smallest dimensions of $I_3$-permutation modules $\mathsf P$ and $\mathsf P'$ such that $\Pic(\bar{S})\oplus\mathsf P\simeq \mathsf P'$.   
    For $S$ of type $I_0$, \cite{bctspstably} shows that $r\leq 3$ by working with a torsor under a 3-dimensional torus, which was improved to $r\leq 2$ in \cite{SBstab}. It remains open to construct a nonrational variety $X$ such that $X\times\bA^1$ is rational.

\section{Cubic hypersurfaces}
\label{sect:cub}

In this section, we provide examples of stably rational smooth cubic hypersurfaces over $\bQ$ in every dimension $\ge 3$. No such examples in odd dimensions were previously known. Over $\bC$,  smooth cubic threefolds  are  nonrational, by \cite{CG}.  By \cite{cubicmatroid}, a very general cubic threefold over $\bC$ is not stably rational. After \cite{Voisincubic}, one might expect that stably rational cubic threefolds are dense in moduli -- we are unable to reach this conclusion with our constructions.

Let $X\subset\bP^{n+1}$ be a smooth cubic hypersurface with $n\geq3$, and $l$ a line contained in $X$. Choose a general plane $\Pi\simeq\bP^{2}\subset\bP^{n+1}$ containing $l$ and consider the projection from $\Pi$:
$$
X\dashrightarrow\bP^{n-2}.
$$
The generic fiber $X_{\eta}$ of the projection is a cubic surface over $K=k(\bP^{n-2})$ containing a line $l_\eta$ defined over $K$. Contracting $l_\eta$ yields  a quartic del Pezzo surface 
$S$ birational to $X_\eta$ over $K$, with a $K$-point. 

Our goal is to find appropriate $X,l,$ and $\Pi$ such that the $\mathfrak g_K$-action on $\Pic(\bar S)$ has one of the types $I_1$, $I_2$, or $I_3$. For these, Corollary~\ref{coro:dp4stab} shows that $S$ is stably rational over $K$, and thus $X$ is stably rational over $k$.

\subsection*{Type $I_1$} Here, we construct a smooth cubic hypersurface in every dimension $\ge 3$ which is birational to a quartic del Pezzo surface fibration of type $I_1$, and is stably rational over $\bQ$.
\begin{prop}\label{prop:CubicI1}
The smooth cubic hypersurface 
$$
X:=\{(x_4-2x_3)x_1^2+3(x_4+2x_3)x_2^2+3x_3^2x_4-x_4^3 +x_5^3 +\sum_{i=1}^rw_i^3=0\}\subset\bP^{r+4}
$$
is stably rational over $k=\bQ$, for every $r\geq0$.
\end{prop}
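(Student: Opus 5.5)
The plan is to apply the strategy described at the start of Section~\ref{sect:cub}: find a line $l\subset X$ and a plane $\Pi\supset l$ such that the generic fiber of the projection from $\Pi$ is a cubic surface over $K=\bQ(\bP^{r+2})$ with a $K$-line, whose contraction $S$ is a quartic del Pezzo surface of type $I_1$. Then Corollary~\ref{coro:dp4stab} gives stable rationality of $S$ over $K$, hence of $X$ over $\bQ$.

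First I check smoothness of $X$. The polynomial splits as $F(x_1,\dots,x_4)+x_5^3+\sum w_i^3$, so the singular points of $X$ are the singular points of the cubic $F=0$ in $\bP^3$ with $x_5=w_i=0$. It therefore suffices to check that the cubic surface
$$
\{(x_4-2x_3)x_1^2+3(x_4+2x_3)x_2^2+3x_3^2x_4-x_4^3=0\}
$$
is smooth. This is a finite Jacobian computation.

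Next I choose the line. The natural choice is $l=\{x_1=x_2=x_4=0,\ x_5=0,\ w_i=0\}$ or a similar coordinate line. Restricted to $x_1=x_2=x_4=0$, the form $F$ becomes $0$, so the line $\{x_1=x_2=x_4=0\}$ in the $(x_1,\ldots,x_4)$-space lies in $F=0$. After setting $x_5=w_i=0$ it lies on $X$. For $\Pi$ I take $l$ together with one further direction, e.g. $\Pi=\{x_1=x_2=0,\ x_4=\lambda(\ldots)\}$, and project to the remaining coordinates. The more convenient route is to treat the fiber structure directly: project from the plane spanned by $l$ and a point, so that the parameters $[x_5:w_1:\cdots:w_r:\ldots]$ vary. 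The generic fiber is then a cubic surface of the form $F(x_1,\dots,x_4)+c\,x_0^3=0$ over $K$, where $c$ is a transcendental cube expression such as $(t_0^3+\sum t_i^3)$ in the parameters, and it contains the $K$-line $l_\eta$.

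The key step is computing the Galois action. Following the description after \eqref{eqn:cubconicbundle}, I compute the residual conic bundle on $X_\eta$ from $l_\eta$. Its discriminant is a quintic binary form over $K$. I determine its splitting field, which gives the permutation part in $\fS_5$, and I determine which components of the singular fibers are swapped, which gives the $\fC_2^4$ part. The intended outcome is a group $\fC_2\times\fS_3=\langle c_2c_3(4,5),c_1c_2c_3c_4(3,4,5)\rangle$. The shape of $F$ suggests where each part comes from. The coefficient $3$ and the factors $x_4\pm 2x_3$ produce an $\fS_3$ from a cubic factor of the discriminant, plausibly together with $\sqrt{-3}$. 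The two linear factors with the cube term produce the swapping of components via square roots of expressions involving $c$. I would verify that the image is exactly $I_1$ and not a subgroup. This can be checked by specializing the parameter to a $\bQ$-value where the Galois group is already maximal, since the geometric Galois group can only shrink under specialization. Alternatively, I would check that $S$ is $K$-minimal and that the image contains the two generators.

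The main obstacle is this Galois computation, in particular showing that the full group $I_1$ appears and that the component switches match $c_2c_3$ and $c_1c_2c_3c_4$ exactly. If the image turned out to be a proper subgroup, I would still be fine provided Condition~(H1) holds or $S$ is not minimal. By Corollary~\ref{coro:dp4stab}(3), either case gives stable rationality. So it really suffices to show that the image is contained in a conjugate of $I_1$, together with $S(K)\neq\varnothing$. The latter is automatic from contracting the $K$-line $l_\eta$. This weaker containment statement is what I would prove first, since it needs only an upper bound on the Galois image.
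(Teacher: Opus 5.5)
Your overall strategy (a cubic surface fibration with a $K$-line, contraction to a quartic del Pezzo surface, then Corollary~\ref{coro:dp4stab}) is the paper's. However, the proposal never actually sets up the fibration, and it omits the computation that constitutes the proof.

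\textbf{The fibration.} As written, the geometry is inconsistent. The locus $\{x_1=x_2=x_4=x_5=w_i=0\}$ is the single point $[0:0:1:0:\cdots:0]$, not a line: three linear conditions in $\bP^3_{x_1,\ldots,x_4}$ cut out a point. The fiber you describe, $F(x_1,\ldots,x_4)+c\,x_0^3=0$, has five homogeneous coordinates, so it is a cubic \emph{threefold}. It is the generic fiber of the projection from the $3$-space $\{x_5=w_i=0\}$ to $\bP^r$, which for $r=0$ is no fibration at all. It is not the fiber of a projection from a plane, whose base would be $\bP^{r+1}$. To get a surface fibration, one of $x_1,\ldots,x_4$ must become a base coordinate. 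The paper takes the line $l=\{x_3=x_4=x_5=w_i=0\}$, on which $F$ vanishes identically, the plane $\Pi=\{x_3=x_5=w_i=0\}$, and the projection to $(x_3,x_5,w_1,\ldots,w_r)$. Then $x_3=aY_3$ on the generic fiber. This parameter $a$ is what produces the residual conic bundle $(t-2a)Y_1^2+3(t+2a)Y_2^2-t^3+3a^2t+\beta$, with $\beta=1+\sum b_i^3$, whose discriminant is $3(t-2a)(t+2a)(t^3-3a^2t-\beta)$.

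\textbf{The Galois computation.} This is the entire content of the proof, and you only guess at it. The guesses are also off: the factors $x_4\pm 2x_3$ produce the two $K$-rational degenerate fibers $t=\pm 2a$, not the cubic factor, and $\sqrt{-3}$ plays no role.

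Your reduction to an upper bound is correct, and slightly more economical than the paper. Stably permutation modules restrict to stably permutation modules, so it suffices that the image of $\mathfrak g_K$ lies in a conjugate of $I_1$ (or even of $I_3$). The paper's checks that $\Delta$ is a nonsquare and $e_1\notin K_1$ only pin down the exact image. But the tool you name, specialization, gives only \emph{lower} bounds on the Galois image, so it cannot prove containment.

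What is needed is an explicit identification of the splitting field and of the action on the ten components:
\begin{enumerate}
\item Put $d_1=\rho_2-\rho_3$, $d_2=\rho_3-\rho_1$, $d_3=\rho_1-\rho_2$, where $\rho_i$ are the roots of $t^3-3a^2t-\beta$. The degenerate conics over the $\rho_i$ split over $K_1=K(\rho_1,\rho_2,\rho_3)$ because $3(4a^2-\rho_i^2)=d_i^2$. This is the non-generic feature of $X$ that keeps the group small.
\item Over $t=\pm 2a$ the conics split after adjoining $e_1=\sqrt{3a(2a^3-\beta)}$ and $e_2=\sqrt{a(2a^3+\beta)}$, and one may take $e_2=e_1d_1d_2d_3/(9(2a^3-\beta))\in K_1(e_1)$.
\item Hence the splitting field is $K_1(e_1)$. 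Track the signs of the $d_i$ and $e_i$ under a $3$-cycle, a transposition fixing $\rho_1$ and $e_1$, and the generator of $\Gal(K_1(e_1)/K_1)$. This shows the image is generated by $(1,2,3)$, $c_1c_2c_3c_5(2,3)$ and $c_4c_5$, a conjugate of $I_1$ in $W(\sD_5)$.
\end{enumerate}
Without these identities, nothing in your argument shows that the image lies in a conjugate of $I_1$ or $I_3$ rather than in a larger subgroup.
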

\begin{proof}
When $r=0$, one can  directly check that $X$ is smooth.    Adding the Fermat cubic in variables $w_i$ preserves the smoothness for every $r\geq 1$. 
Choose the line and the point
$$
l=\{x_3=x_4=x_5=w_1=\ldots=w_r=0\}, \quad p=[0:0:0:1:0:\ldots:0].
$$ 
Projection from $\Pi=\langle l,p\rangle$ is given by 
$$
X\dashrightarrow\bP^{r+1},\quad (x_i,w_j)\mapsto (x_3,x_5,w_1,\ldots,w_r).
$$
Let $$K=k(\bP^{r+1})=k(a,b_1,\ldots,b_r).$$ We work on the affine chart $x_5\ne 0$. Set $a={x_3}/{x_5},$ and $b_i={w_i}/{x_5}$. Renaming the coordinates 
$$
Y_1=x_1,\quad Y_2=x_2,\quad Y_3=x_5,\quad Y_4=x_4,
$$
the generic fiber $X_\eta\subset\bP^3_{Y_1,Y_2,Y_3,Y_4}$ is given by
\begin{align}\label{eqn:I1Xeta}
Y_3(\beta Y_3^2+2a(3Y_2^2-Y_1^2))+Y_4(Y_1^2+3Y_2^2+3a^2Y_3^2-Y_4^2)=0,
\end{align}
where 
$\beta=1+\sum_{i=1}^rb_i^3.
$
 One can check that $X_\eta$ is smooth over $K$. Contracting the line 
 $$
 l_\eta=\{Y_3=Y_4=0\}\subset\bP^3,
 $$
 we obtain a quartic del Pezzo surface $S\subset\bP^4_{Y_1,\ldots,Y_5}$ over $K$ given by 
$$
     \beta Y_3^2+2a(3Y_2^2-Y_1^2)-Y_4Y_5=Y_1^2+3Y_2^2+3a^2Y_3^2-Y_4^2+Y_3Y_5=0.
$$
We show that $S$ is stably rational over $K$.
Consider  the projection from the line $l_\eta$:
$$
\pi_\eta:X_\eta\to\bP^1,\quad (Y_1,Y_2,Y_3,Y_4)\mapsto(Y_3,Y_4).
$$ 
Over the affine chart $t={Y_4}/{Y_3}$, the generic fiber is the conic given by 
$$
f(Y_1,Y_2):=(t - 2a)Y_1^2 + 3(t + 2a)Y_2^2 - t^3 + 3a^2t + \beta=0.
$$
The discriminant of the quadratic form $f$ is
$$
\mathrm{disc}(f)=3(t-2a)(t+2a)(t^3-3a^2t-\beta).
$$
Let $\rho_1,\rho_2,\rho_3$ be the roots of the irreducible polynomial $$
c(t) = t^3 - 3a^2t -\beta\in K[t].
$$ 
A discriminant computation shows that
$$
\Delta:=\mathrm{disc}(c(t))=27(2a^3+\beta)(2a^3-\beta)
$$
is not a square in $K$. Thus $\Gal(K_1/K)=\fS_3$, 
where
$
K_1=K(\rho_1,\rho_2,\rho_3).
$
The five singular fibers of $\pi_\eta$ are individually defined over $K_1$. Put 
$$
d_1=\rho_2-\rho_3,\quad d_2=\rho_3-\rho_1,\quad d_3=\rho_1-\rho_2.
$$
The coefficients of $c(t)$ imply that
\begin{align*}
\rho_1+\rho_2+\rho_3=0,\quad \rho_1\rho_2+\rho_2\rho_3+\rho_1\rho_3=-3a^2,
\end{align*}
and thus
$$
3(4a^2-\rho_i^2)=d_i^2\in K_1^{\times2},\quad i=1,2,3.
$$
It follows that singular fibers above $\rho_1,\rho_2,$ and $\rho_3$ split over $K_1.$ We label their components by
\begin{align}\label{eqn:I1fiber1}
F_i^\pm: (\rho_i-2a)Y_1\pm d_iY_2=0.
\end{align}
On the other hand, singular fibers over 
$
2a$ and $-2a$  split after adjoining $e_1$ and $e_2$, respectively, where 
$$
e_1^2={3a(2a^3-\beta)},\quad e_2^2={a(2a^3+\beta)}.
$$ 
We label their components by
\begin{align}\label{eqn:I1fiber2}
 F_4^\pm:6aY_2\pm e_1=0,\quad F_5^\pm :2aY_1\pm e_2=0.
\end{align}
Observe that $e_2\in K_1(e_1)$. Indeed, one can check that
$$
\left(\frac{e_1}{9(2a^3-\beta)}\cdot d_1d_2d_3\right)^2=\left(\frac{e_1}{9(2a^3-\beta)}\right)^2\cdot\Delta=a(2a^3+\beta).
$$
 All components of the singular fibers of $\pi_\eta$ are individually defined over 
$$
K_1(e_1)=K(\rho_1,\rho_2,\rho_3,e_1),
$$
which is thus the splitting field of $X_{\eta}$ and $S$ over $K$. There is a unique quadratic subextension  $K(\sqrt{\Delta})/K$ of $K_1/K$. Assume that $e_1\in K_1$.  Then $K(\sqrt{\Delta})=K(e_1)$, implying that either  $e_1^2$ or $e_1^2/\Delta$ is a square in $K$. Computing their valuations at $a$, we see that this is impossible. Thus $e_1\not\in K_1$ and
$K(e_1)\cap K_1=K$. Since $K(e_1)K_1=K_1(e_1)$, we know that
$$
\Gal(K_1(e_1)/K)=\Gal(K(e_1)/K)\times \Gal(K_1/K)=\fC_2\times\fS_3.
$$
 Recall from Section~\ref{sect:stab-rat} that  the image of $\Gal(K_1(e_1)/K)$ (or equivalently, the image of $\mathfrak g_K$) in $W(\sD_5)$
is determined by its action on the 10 singular fibers \eqref{eqn:I1fiber1} and \eqref{eqn:I1fiber2}. \begin{itemize}
    \item The (untwisted) 3-cycle from $\fS_3$ permutes $\rho_i$ and thus $d_i$, and it fixes $e_1$ and $e_2$.  It maps to $(1,2,3)\in W(\sD_5)$. 
    \item Choose the (untwisted) 2-cycle $\tau$ from $\fS_3$ which fixes $\rho_1$ and $e_1$, and switches $\rho_2$ and $\rho_3$. Then $\tau(d_1)=-d_1$, $\tau(d_2)=-d_3$, $\tau(d_3)=-d_2$, and  $\tau(e_2)=-e_2$. It maps to $c_1c_2c_3c_5(2,3)$.
    \item Let $\iota\in \Gal(K_1(e_1)/K_1)$ be the order 2 element. We have that $\iota(d_i)=d_i,$ for $i=1,2,3$, and $\iota(e_i)=-e_i, $ for $i=1,2.$ It maps to $c_4c_5\in W(\sD_5)$.
\end{itemize}
Therefore, the image of $\mathfrak g_K$ in $W(\sD_5)$ is 
$$
\langle (1,2,3),c_1c_2c_3c_5(2,3),c_4c_5\rangle\subset W(\sD_5),
$$
which is conjugate to the group of type $I_1$ in Proposition~\ref{prop:h1}. Applying Corollary~\ref{coro:dp4stab}, we conclude that $S$ is stably rational over $K$. It follows that the smooth cubic $X$ is stably rational over $k=\bQ$.
\end{proof}

The same arguments yield stably rational but nonrational smooth intersections of two quadrics in $\bP^5$, over $\bQ$ and over $\bC(t)$. The following example was found using ChatGPT 5.6-Sol. 

\begin{exam}
\label{exam:x22}
Let $X=\{q_1=q_2=0\}\subset \bP^5$ be the smooth complete intersection of two quadrics, over $\bQ$, where
\begin{align*}
q_1&=2x_1^2-6x_2^2+3x_3^2-3x_3x_4+x_5^2-x_4x_6,
\\
q_2&=x_1^2+3x_2^2-x_4^2-x_3x_6.
\end{align*}
Then 
\begin{enumerate}
\item $X$ contains no lines over $\bQ$. In fact, $X$ contains no lines over $\bR$, since the quadratic form $q_1-\frac{21}{10}q_2$ has signature $(1,5)$. It follows that $X$ is not rational over $\bQ$ (and over $\bR$), by \cite{HT-quad} or \cite{BW}. 
\item The generic fiber of the rational map
\begin{equation*}
         f:X\dashrightarrow\bP^1,
         \qquad (x_1,\ldots,x_6)\mapsto(x_3,x_5),
\end{equation*}
is a quartic del Pezzo surface $S$ over $\bQ(a)$ which is isomorphic to the surface~\eqref{eqn:I1Xeta} with $\beta=-3a^3-a$. The same computation as above shows that $S$ is of type $I_1$, and thus $X$ is stably rational, over $\bQ$. 
\end{enumerate}
\end{exam}

\subsection*{Type $I_3$} Here, we construct a smooth cubic hypersurface over $\bQ$, in every dimension $\ge 3$, that is birational to a quartic del Pezzo surface fibration of type $I_3$, and is stably rational over $\bQ$.

\begin{prop}
    \label{prop:cubictypeI3}
    The smooth cubic hypersurface $X\subset\bP^{r+4}$ given by
    $$
    x_4(x_1^2+2x_1x_2)+x_3(x_1^2+x_1x_2+x_2^2)+x_3^3-x_3x_4^2+x_4^3+2x_5^3+\sum_{i=1}^rw_i^3=0
    $$
    is stably rational over $k=\bQ$, for any $r\geq 0.$
\end{prop}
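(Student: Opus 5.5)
The plan follows the proof of Proposition~\ref{prop:CubicI1} step by step. First I will check smoothness of $X$ for $r=0$ directly (Jacobian criterion over $\bQ$). Adding the Fermat cubic $\sum_{i=1}^r w_i^3$ in new variables then preserves smoothness for all $r\ge 1$.

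Next I take the line $l=\{x_3=x_4=x_5=w_i=0\}$, which lies on $X$ because every monomial contains $x_3$, $x_4$, $x_5$ or some $w_i$. I take $p=[0:0:0:1:0:\ldots:0]$, which lies on $X$ only if the $x_4^3$ coefficient vanishes. Since it does not, I will instead choose the point and plane so that the fibration has the shape of \eqref{eqn:I1Xeta}. Concretely, I project from $\Pi=\langle l,p\rangle$ with $p$ a $\bQ$-point chosen so that the map is $X\dashrightarrow\bP^{r+1}$, $(x,w)\mapsto (x_4,x_5,w_1,\ldots,w_r)$ (or with $x_3$ in place of $x_4$). Over $K=\bQ(a,b_1,\ldots,b_r)$, with $a=x_4/x_5$ and $\beta=2+\sum b_i^3$, the generic fiber $X_\eta\subset\bP^3_{x_1,x_2,x_3,x_5}$ is a smooth cubic surface containing $l_\eta=\{x_3=x_5=0\}$. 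Contracting $l_\eta$ gives a quartic del Pezzo surface $S/K$ with a $K$-point.

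Projecting $X_\eta$ from $l_\eta$ gives the conic bundle $\pi_\eta$. Writing $t=x_3/x_5$, the fiber conic is
$$
(a+t)x_1^2+(2a+t)x_1x_2+t x_2^2+(t^3-a^2t+a^3+\beta)=0.
$$
The discriminant of the binary quadratic part is $(2a+t)^2-4t(a+t)=4a^2-3t^2$, so the degenerate fibers sit over the two roots of $3t^2=4a^2$, namely $t=\pm 2a/\sqrt3$, and over the three roots $\rho_i$ of the cubic $c(t)=t^3-a^2t+a^3+\beta$. I then compute the following, in order.
\begin{itemize}
\item The Galois group of $c(t)$, which I expect to be $\fS_3$, with discriminant $\Delta$.
\item The quadratic classes over $K_1=K(\rho_1,\rho_2,\rho_3)$ that split the three fibers over the $\rho_i$. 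These are $-(4a^2-3\rho_i^2)$ up to squares, and I hope to express them through $d_i=\rho_j-\rho_k$, as in the $I_1$ case.
\item For the conjugate pair of fibers over $t=\pm 2a/\sqrt3$: the field $\bQ(\sqrt3)$ is needed to separate them, and a further quadratic class over $K(\sqrt3)$ splits their components.
\end{itemize}
Because these two fibers are swapped by $\sqrt3\mapsto-\sqrt3$, the image in $W(\sD_5)$ now contains a transposition of indices $4,5$. It combines with the $\fS_3$ on indices $1,2,3$ and with component switches $c_ic_j$, and I expect the resulting order-$24$ image $\fC_3\rtimes\fD_4$ of type $I_3$.

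The main obstacle is this Galois-theoretic step. I must determine the full splitting field $K_1(\sqrt3,e)$ and show the relevant quadratic extensions are independent, by valuation arguments at $a$ or at factors of $\Delta$ as in Proposition~\ref{prop:CubicI1}. I must also write down explicit generators of the image and check that they are conjugate in $W(\sD_5)$ to $\langle c_1c_2c_3c_5(2,5),c_3c_4(3,4)(1,5,2)\rangle$. An order count to $24$, together with the fact that the action permutes the five singular fibers through $\fS_3\times\fC_2$, should pin down the conjugacy class among the subgroups in Proposition~\ref{prop:h1}. Alternatively it suffices to verify Condition {\bf (H1)} by computer, using the tables in \cite{TZdp4-tables}. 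Once type $I_3$ (or more generally {\bf (H1)}) is established, Corollary~\ref{coro:dp4stab} gives stable rationality of $S$ over $K$, hence of $X_\eta$ over $K$, hence of $X$ over $\bQ$.
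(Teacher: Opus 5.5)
Your setup is the same as the paper's. You use the same projection $(x,w)\mapsto(x_4,x_5,w_1,\ldots,w_r)$, the same generic fiber and line $l_\eta$, the same conic bundle, and the same splitting of the degeneracy locus into $c(t)=t^3-a^2t+a^3+\beta$ and $3t^2-4a^2$. The center of projection is the plane $\{x_4=x_5=w=0\}=\langle l,[0:0:1:0:\cdots:0]\rangle$, and nothing requires the auxiliary point to lie on $X$; in Proposition~\ref{prop:CubicI1} it does not either. However, the content of the proposition is the Galois computation you defer, and as written your proposal does not determine the image of $\mathfrak g_K$ in $W(\sD_5)$.

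Two of the concrete claims you do make need correcting.

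First, the sign. The fiber over $\rho_i$ is the pair of lines cut out by the binary form, so it splits over $K_1(\sqrt{4a^2-3\rho_i^2})$, not over $K_1(\sqrt{-(4a^2-3\rho_i^2)})$. From $\sum\rho_i=0$ and $\sum_{i<j}\rho_i\rho_j=-a^2$ one gets $4a^2-3\rho_i^2=d_i^2$, so these three fibers already split over $K_1$. Your sign would wrongly put $\sqrt{-1}$ into the splitting field.

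Second, counting to order $24$ with permutation image $\fS_3\times\fC_2$ does not identify the conjugacy class. The group $\langle(1,2,3),(2,3),(4,5),c_4c_5\rangle$ has order $24$ and the same permutation image. But it preserves the pair $\{L_{45},Q\}$ of disjoint lines, so it is not minimal and not of type $I_3$. What decides the type is which component switches accompany each permutation.

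The missing ingredient is the explicit quadratic class splitting the two conjugate fibers over $\pm2a/\sqrt3$. With $\rho_4=2a/\sqrt3$ it is $D_1^2=-36(a+\rho_4)c(\rho_4)=(-32\sqrt3-52)a^4-(24\sqrt3+36)a\beta$. Its norm to $K$ is $D_1^2D_2^2=16a^2\Delta$, so one may take $D_2=4a\,d_1d_2d_3/D_1$.

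This link between fibers $4,5$ and $\sqrt\Delta$ does two things:
\begin{itemize}
\item it makes the splitting field $K_1(D_1)$ of degree $24$ rather than $48$;
\item it forces the lift of $(2,3)$ fixing $D_1$ to negate $D_2$, which produces the twisted generator $c_1c_2c_3c_5(2,3)$.
\end{itemize}
Alternatively, the $c_5$ is forced by the parity constraint in $W(\sD_5)$, since that lift switches components an odd number of times on the first three fibers and fixes the components over $\rho_4$.

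You still need two independence statements:
\begin{itemize}
\item $\sqrt3\notin K_1$, because $\Delta/3$ is not a square in $K$;
\item $D_1\notin K_1(\sqrt3)$; the paper shows this by noting that $K(D_1)/K$ is a non-Galois quartic, while every index-$4$ subgroup of $\fC_2\times\fS_3$ is normal.
\end{itemize}
Only with all of this does the image come out as $\langle(4,5),c_4c_5,(1,2,3),c_1c_2c_3c_5(2,3)\rangle$, which is conjugate to $I_3$, so that Corollary~\ref{coro:dp4stab} applies.
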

\begin{proof}
  One can directly check that
$X$ is smooth when $r=0$. Adding the Fermat cubic in variables $w_i$ preserves the smoothness for every $r\geq 1$. Consider the projection 
    $$
    X\dashrightarrow \bP^{r+1},\quad (x_i,w_j)\to (x_4,x_5,w_1,\ldots,w_r).
    $$
    Let $K=k(\bP^{r+1})=k(a,b_1,\ldots,b_r)$. Set $a=x_4/x_5$, and $b_i=w_i/x_5$. Renaming the coordinates 
    $$
    Y_1=x_1,\quad Y_2=x_2, \quad Y_3=x_5,\quad Y_4=x_3,
    $$
 the generic fiber $X_\eta\subset\bP^3_{Y_1,Y_2,Y_3,Y_4}$ is given by
$$
Y_3(aY_1^2+2aY_1Y_2+(a^3+\beta)Y_3^2)+Y_4(Y_1^2+Y_1Y_2+Y_2^2-a^2Y_3^2+Y_4^2)=0,
$$
where $\beta=2+\sum_{i=1}^rb_i^3$. One can check that $X_\eta$ is smooth over $K$. Contracting the line $l_\eta=\{Y_3=Y_4=0\}\subset\bP^3$ yields a quartic del Pezzo surface $S$ birational to $X_\eta$ over $K$. Consider the  conic bundle obtained via projection from $l_\eta$
$$
\pi_\eta:X_\eta\to\bP^1,\quad (Y_1,Y_2,Y_3,Y_4)\to (Y_3,Y_4).
$$
Over the affine chart $t=Y_4/Y_3$, the generic fiber is the conic given by 
$$
f(Y_1,Y_2):=(t + a)Y_1^2 + (t + 2a)Y_1Y_2 + tY_2^2 + t^3 -a^2t + a^3 + \beta=0.
$$
The discriminant of the quadratic form $f$ is 
$$
\mathrm{disc}(f)=-\frac14(t^3-a^2t+a^3+\beta)(3t^2-4a^2).
$$
Let $c_1(t)=t^3-a^2t+a^3+\beta$, and $\rho_1,\rho_2,\rho_3$ its three roots. One sees that
$$
\Delta:=\disc(c_1)=-23a^6 - 54a^3\beta - 27\beta^2
$$
is not a square in $K$. Thus $\Gal(K_1/K)=\fS_3$, where $K_1=K(\rho_1,\rho_2,\rho_3)$. Put 
$$
d_1=\rho_2-\rho_3,\quad d_2=\rho_3-\rho_1,\quad d_3=\rho_1-\rho_2.
$$
The coefficients of $c_1(t)$ imply that 
$$
d_i^2=4a^2-3\rho_i^2.
$$
The singular fibers over $\rho_i$ split over $K_1$, and their components are
\begin{align}\label{eqn:I3fiber1}
F_i^\pm:    2(\rho_i+a)Y_1+(\rho_i+2a\pm d_i)Y_2=0,\quad i=1,2,3.
\end{align}
Let $c_2(t)=3t^2-4a^2$. The splitting field of $c_2$ is $K(\sqrt{3})$, and $c_2$ has two roots 
$\rho_4=\frac{2a}{\sqrt3}$ and $\rho_5=-\frac{2a}{\sqrt3}$. We check that $\sqrt{3}\notin K_1$. Indeed, $K(\sqrt 3)$ is different from the unique quadratic subextension $K(\sqrt{\Delta})/K$ of $K_1/K$, since ${\Delta/3}$ is  not a square in $K$. Set $K_2=K_1(\sqrt 3)$. It follows that
$$
\Gal(K_2/K)=\Gal(K(\sqrt 3)/K)\times \Gal(K_1/K)=\fC_2\times\fS_3.
$$
Over $K_2$, each singular fiber of $\pi_\eta$ is defined. Choose $D_1\in\bar K$ such that 
$$
D_1^2=(-32\sqrt3-52)a^4-(24\sqrt3+36)a\beta,
$$
and put 
\begin{align}
    \label{eqn:I3D2}
D_2=\frac{4a}{D_1}d_1d_2d_3.
\end{align}
One can check that 
\begin{align*}
D_2^2=(32\sqrt3-52)a^4+(24\sqrt3-36)a\beta.
\end{align*}
The singular fibers over $\rho_4$ and $\rho_5$ split over $K_2(D_1)$, with components: 
\begin{align}\label{eqn:I3fiber2}
    F_4^\pm&: 2a(3+2\sqrt{3} )Y_1+a(6+2\sqrt3)Y_2\pm{D_1}=0,\\\notag
    F_5^\pm&: 2a(3-2\sqrt{3} )Y_1+a(6-2\sqrt3)Y_2\pm{D_2}=0.
\end{align}
Thus, the splitting field of $X_\eta$ and $S$ is 
$$
K_2(D_1)=K(\sqrt3,\rho_1,\rho_2,\rho_3)(D_1)=K_1(D_1).
$$
We show that $D_1\notin K_2$. 
One can check that the minimal polynomial of $D_1$ over $K$ is 
\begin{align}
 \label{eqn:I3D1mini}   
 (t^2-D_1^2)(t^2-D_2^2)\in K[t].
\end{align}
Then $K(D_1)/K$ has degree 4 and is not Galois since $D_2\not\in K(D_1)$.
However, any index 4 subgroup in $\Gal(K_2/K)=\fC_2\times\fS_3$ is normal, which implies that $D_1\notin K_2$. It follows that 
$$
[K_1(D_1):K]=[K_2(D_1):K]=[K_2(D_1):K_2][K_2:K]=24.
$$ 

 Let $G=\Gal(K_1(D_1)/K)$. From~\eqref{eqn:I3D2}, one sees that $K_2(D_1)$ is the splitting field of \eqref{eqn:I3D1mini} over $K_1$. It follows that $K_1(D_1)/K_1$ is Galois, and $\Gal(K_1(D_1)/K_1)=\fC_2^2$, since it has 3 different quadratic subextensions $K_1(\sqrt3)$, $K_1(D_1+D_2)$ and $K_1(D_1-D_2)$. We determine the image of $\Gal(K_1(D_1)/K_1)$ in $W(\sD_5)$ by identifying its action on~\eqref{eqn:I3fiber2}.
\begin{itemize}
    \item One generator of $\Gal(K_1(D_1)/K_1)$ swaps $D_1$ and $D_2$, and maps to $(4,5)\in W(\sD_5)$.
    \item The other generator of $\Gal(K_1(D_1)/K_1)$ changes the signs of $D_1$ and $D_2$ simultaneously, and maps to $c_4c_5\in W(\sD_5).$
\end{itemize}
The subextension 
$$
K_1(D_1)\supset K_1\supset K
$$
yields an extension of Galois groups
\begin{align}
    \label{eqn:I3ext}
    1\to \fC_2^2\to G\to\fS_3\to 1.
\end{align}
The extension~\eqref{eqn:I3ext} splits since we know that 
$
  K_1\cap K(D_1)=K.  
$
Any element in $\Gal(K_1/K)=\fS_3$ can be lifted to $G$ as an element fixing $D_1$. We describe their actions on singular fibers~\eqref{eqn:I3fiber1} and~\eqref{eqn:I3fiber2}.
\begin{itemize}
    \item A lift of $(1,2,3)\in\fS_3$ to $G$ permutes $\rho_1\mapsto\rho_2\mapsto\rho_3$, and fixes $D_1$ and $D_2$. Its image in $W(\sD_5)$ is $(1,2,3)$.
    \item A lift $\tau$ of $(2,3)\in\fS_3$ fixes $\rho_1$ and $D_1$, and swaps $\rho_2$ and $\rho_3$. One can check that $\tau (d_1)=-d_1$, $\tau(d_2)=-d_3$, $\tau(d_3)=-d_2$, and thus $\tau(D_2)=-D_2$. Its image in $W(\sD_5)$ is $c_1c_2c_3c_5(2,3)$.
\end{itemize}
Thus the image of $\mathfrak g_K$ in $W(\sD_5)$ is 
$$
\langle (4,5),c_4c_5,(1,2,3),c_1c_2c_3c_5(2,3)\rangle,
$$
which is conjugate to the group of type $I_3$ listed in Proposition~\ref{prop:h1}. Applying Corollary~\ref{coro:dp4stab}, we conclude that $X$ is stably rational over $\bQ$. 
\end{proof}

\begin{rema}
 The cubic hypersurfaces in Propositions~\ref{prop:CubicI1} and~\ref{prop:cubictypeI3} are not isomorphic over $\bC$. One can see this by checking that the cubic surfaces $X\cap\{x_5=w_1=\ldots=w_r=0\}$
 in the two cases have different numbers of Eckardt points.
 
 It would be interesting to study quartic del Pezzo fibration structures of types $I_0,I_1,I_2,I_3$ on other nonrational varieties, such as singular quartic hypersurfaces or quartic double solids, cf. \cite{cheltsovquartic}, \cite{Voisinquartic}.
\end{rema}
\section{Stable linearizability}
\label{sect:C3C4}

Recall that a generically free regular $G$-action on a variety $X$ is called stably linearizable if $X\times\bP^r$ is equivariantly birational to the projectivization of a linear $G$-representation, with the trivial $G$-action on $\bP^r$, for some $r\geq 0$. Here, we give a detailed proof of stable linearizability of the unique 
$G$-minimal nonlinearizable del Pezzo surface satisfying Condition {\bf(H1)}, see \cite[Theorem 1.2]{Pro-2}.  We work over a field $k$ of characteristic zero containing $\zeta_3$, a primitive third root of 1. 

\begin{prop}
\label{prop:c3c4}
    Let $S$ be the del Pezzo surface of degree 4 given by 
    \begin{align*}
          S=\{x_1^2+\zeta_3x_2^2+\zeta_3^2x_3^2+x_4^2=x_1^2+\zeta_3^2x_2^2+\zeta_3x_3^2+x_5^2=0\},
    \end{align*}
    with an action of $G=\fC_3\rtimes\fC_4$ generated by 
    \begin{align*}
        \sigma:(\mathsf x)&\mapsto (x_2,x_3,x_1,\zeta_3x_4,\zeta_3^2x_5),\\
        \tau:(\mathsf x)&\mapsto (x_1,x_3,x_2,-x_5,x_4).
    \end{align*}
    Then the $G$-action on $S$ is stably linearizable. 
\end{prop}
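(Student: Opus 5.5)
The plan is to run the equivariant analogue of the argument of Sections~\ref{sect:rat} and~\ref{sect:stab-rat}, replacing the Galois group by $G$. There are three steps, carried out in order.

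\emph{Step 1: construct the torsor.} We write down a universal torsor $\cT\to S$ explicitly, namely an open subset of $U=\Spec(\Cox(S))\subset\bA^{16}$. Its coordinates are the 16 sections attached to the exceptional curves, and it is cut out by the 20 quadratic relations coming from the 10 conic bundles. These conic bundles are the rulings of the five singular quadrics of the pencil spanned by $Q_1=\{x_1^2+\zeta_3x_2^2+\zeta_3^2x_3^2+x_4^2=0\}$ and $Q_2$.

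\emph{Step 2: lift the $G$-action.} We lift the action of $G$ to $\cT$, compatibly with a lift of $G$ to the N\'eron--Severi torus $T$. In general a lift exists only as an action of an extension of $G$ by $T(k)$. The obstruction to lifting $G$ itself lies in $\rH^2(G,T(k))$, and it vanishes once $G$ has a fixed point on $S$ or on $\cT$. Concretely, I would normalize the coordinates $e_i,l_{ij},q$ so that $\sigma$ and $\tau$ act on $\bA^{16}$ by monomial matrices (a permutation of the 16 lines times roots of unity) that preserve the ideal $\langle R_1,\dots,R_{20}\rangle$, and check directly that $\sigma^3=\tau^4=1$ and $\tau\sigma\tau^{-1}=\sigma^{-1}$ hold on the nose. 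Once the linear action of $G$ on $\bA^{16}$ preserving $U$ is fixed, the lift is automatic.

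\emph{Step 3: find a $G$-fixed point.} We then exhibit a $G$-fixed point $u\in\cT(k)$. This is the key observation announced in the introduction. Since the action on $\bA^{16}$ is linear, fixed points form the linear subspace $(\bA^{16})^G$, and I would intersect it with $U$, avoiding the loci where coordinates defining exceptional divisors vanish simultaneously (those points lie outside $\cT$). The fixed subspace is small, with roughly one coordinate per $G$-orbit of lines, so solving the relations on it is a finite computation.

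\emph{Step 4: linearize $\cT$.} Given the fixed point $u$, we pick a $G$-fixed point $x$ on the cone, a general point of the form $u+t$ with $t\in(\bA^{16})^G$ or simply $u$ itself if it is general enough. The projection of $\bP(U)$ from the embedded tangent space $T_x$ is then $G$-equivariant and linear, with image in $\bP(\bA^{16}/T_x)\simeq\bP^7$, and $G$ acts on that $\bP^7$ through a linear representation. By Lemma~\ref{lemm:secant} and the OADP argument of Theorem~\ref{thm:projection}, this projection is birational for a general point. Here, however, the point is forced to be $G$-fixed, so genericity must be verified directly, as the remark after Lemma~\ref{lemm:secant} suggests: we check that the fibre of the projection through a test point is a single point, for instance by checking that $\dim\ker J$ equals $1$ at the relevant points. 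This makes $\bP(\cT)$, and hence the cone $\cT$, equivariantly birational to a linear representation, i.e.\ $\cT$ is $G$-linearizable.

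\emph{Step 5: descend to $S$.} We conclude with the equivariant version of Proposition~\ref{prop:bctsmain}. The $G$-module $\Pic(S)$ is stably permutation, by the $I_2$ case of Lemma~\ref{lemm:stablyperm} (where $I_2\subset I_3$). The no-name lemma together with the equivariant Hilbert~90 and Shapiro arguments then give a $G$-birationality $\cT\sim S\times T$ and stable linearizability of $T$, so $S\times\bA^{r}$ is $G$-birational to a linear representation.

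I expect the main obstacle to be Step~3, together with the genericity check in Step~4. The fixed point must lie in the torsor, not just in $U$, and the tangent-space projection from a \emph{special} point must still be birational. If projection from $u$ itself degenerates, I would instead use a general point of the fixed locus $U^G\cap\cT$, if that locus is positive-dimensional, or a $G$-invariant general point of the affine cone line $k^\times u$ after twisting by a character.
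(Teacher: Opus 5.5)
Your outline follows the same route as the paper. Via \cite[Corollary 9]{HT-torsor} one needs three things: (1) $\Pic(S)$ stably permutation (Lemma~\ref{lemm:stablyperm}); (2) an explicit lift of $\sigma,\tau$ to $\Cox(S)$ by monomial substitutions preserving $\langle R_1,\dots,R_{20}\rangle$; (3) linearizability of $U\supset\cT$, via projection from the tangent space at a $G$-fixed point. Your Step~3 is borne out. The linear forms $f_1,\dots,f_8$ in the paper are rows of $J(u)$ up to sign, so you can read off the center $u$: $e_1=\dots=e_5=1$, $\ell_{12}=1$, $\ell_{13}=\zeta_3$, $\ell_{45}=3(\zeta_3-1)$, $q=-3(\zeta_3+2)$, and so on. One checks that $u$ is a genuine $G$-fixed vector for the paper's lift, with all $16$ coordinates nonzero, so $u\in\cT(k)$.

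The gap is in how you propose to certify birationality in Step~4, which is the real content of (3).
\begin{itemize}
\item The test $\dim\ker J=1$ is the secant-uniqueness computation of Lemma~\ref{lemm:secant}. It only re-proves the OADP property, which through Theorem~\ref{thm:projection} controls projection from a \emph{general} center. It says nothing about the special point $u$.
\item Showing that one test fibre, computed where the projection is defined, is a single point does not bound the degree either. Further preimages may lie over the base locus $\bP(U)\cap\bP(T_uU)$. You would need the generic fibre, or the scheme-theoretic fibre of the graph closure.
\item Your fallbacks give no extra freedom. With the paper's lift, $\bP(U)^G$ is finite (four points), so points $u+t$ with $t\in(\bA^{16})^G$ generally do not lie on $U$. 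All points of the line $k^\times u$ have the same tangent space $T_uU$, so they give the same map.
\end{itemize}
Everything therefore rests on the single map $\varphi\colon U\dashrightarrow \bA^{16}/T_uU\simeq\bA^8$. The paper proves it birational by writing down an explicit inverse \cite{TZdp4-tables}. That, or an honest generic-fibre computation, is what your Step~4 needs in place of the tests you name.
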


A quick proof of this proposition, based on \cite{DR}, would go as follows: 
a $G$-variety is stably linearizable if and only if all twists (or equivalently, one {\em versal} twist) are stably rational, see \cite[Theorem 1.1]{DR}, combined with \cite[Theorem 2.5]{KT-notions}. The key is that 
$
S^G\neq \emptyset,
$
so that for every twist over a field $K/k$, the set of $K$-points of a form of $S$ is nonempty. Then Corollary~\ref{coro:stable} gives stable rationality over $K$. Nevertheless, we include a  direct proof within equivariant geometry, based on the equivariant version of the torsor formalism developed in \cite{HT-torsor} and \cite{KT-uni}.
\begin{proof}[Proof of Proposition~\ref{prop:c3c4}]
  Note that $k$ is the splitting field of $S$. Thus, over $k$, there is a unique universal torsor over $S$. 
By \cite[Corollary 9]{HT-torsor}, it suffices to show that \begin{enumerate}
 \item $\Pic(S)$ is a $G$-stably permutation module;
        \item the $G$-action lifts to the universal torsor $\cT$ of $S$;
        \item the $G$-action on $\cT$ is linearizable. 
    \end{enumerate}
   We have proved (1) in Lemma~\ref{lemm:stablyperm}. 
By \cite{HT-torsor}, (2) follows from the fact that $S^G\ne\varnothing$. To show (3), we repeat the Cox ring computation from Section~\ref{sect:rat} with the same notation, noting that $S$ is the blowup of $\bP^2$ at the following 5 points:
$$
p_1=[1:0:0],\quad p_2=[0:1:0],\quad p_3=[0:0:1], \quad 
p_4=[1:1:1],
$$
$$
p_5=[1:\zeta_3:\zeta_3^2].
$$
In the anticanonical model, the exceptional curves can be identified as follows
{\begin{align*}
E_1
&= \{
x_3-x_1+x_2=\;
x_4-\zeta_3^2x_1-x_2=\;
x_5-\zeta_3 x_1-x_2
=0\},\\
E_2
&= \{
x_3+x_1-x_2=\;
x_4+\zeta_3^2x_1+x_2=\;
x_5+\zeta_3 x_1+x_2
=0\},\\
E_3
&= \{
x_3-x_1-x_2=\;
x_4+\zeta_3^2x_1-x_2=\;
x_5+\zeta_3 x_1-x_2
=0\},\\
E_4
&= \{
x_3+x_1+x_2=\;
x_4-\zeta_3^2x_1+x_2=\;
x_5+\zeta_3 x_1-x_2
=0\},\\
E_5
&= \{
x_3+x_1+x_2=\;
x_4+\zeta_3^2x_1-x_2=\;
x_5-\zeta_3 x_1+x_2
=0\},\\
L_{12}
&= \{
x_3-x_1-x_2=\;
x_4-\zeta_3^2x_1+x_2=\;
x_5-\zeta_3 x_1+x_2
=0\},\\
L_{13}
&= \{
x_3+x_1-x_2=\;
x_4-\zeta_3^2x_1-x_2=\;
x_5-\zeta_3 x_1-x_2
=0\},\\
L_{14}
&= \{
x_3-x_1+x_2=\;
x_4+\zeta_3^2x_1+x_2=\;
x_5-\zeta_3 x_1-x_2
=0\},\\
L_{15}
&= \{
x_3-x_1+x_2=\;
x_4-\zeta_3^2x_1-x_2=\;
x_5+\zeta_3 x_1+x_2
=0\},\\
L_{23}
&= \{
x_3-x_1+x_2=\;
x_4+\zeta_3^2x_1+x_2=\;
x_5+\zeta_3 x_1+x_2
=0\},\\
L_{24}
&= \{
x_3+x_1-x_2=\;
x_4-\zeta_3^2x_1-x_2=\;
x_5+\zeta_3 x_1+x_2
=0\},\\
L_{25}
&= \{
x_3+x_1-x_2=\;
x_4+\zeta_3^2x_1+x_2=\;
x_5-\zeta_3 x_1-x_2
=0\},\\
L_{34}
&= \{
x_3-x_1-x_2=\;
x_4-\zeta_3^2x_1+x_2=\;
x_5+\zeta_3 x_1-x_2
=0\},\\
L_{35}
&= \{
x_3-x_1-x_2=\;
x_4+\zeta_3^2x_1-x_2=\;
x_5-\zeta_3 x_1+x_2
=0\},\\
L_{45}
&= \{
x_3+x_1+x_2=\;
x_4-\zeta_3^2x_1+x_2=\;
x_5-\zeta_3 x_1+x_2
=0\},\\
Q
&= \{
x_3+x_1+x_2=\;
x_4+\zeta_3^2x_1-x_2=\;
x_5+\zeta_3 x_1-x_2=0
\}.
\end{align*}}
 We present an explicit lift of the $G$-action to $\Cox(S)$, and verify that it is compatible with the $G$-action on $S$. Consider the $G$-action given by
\begin{align*}
\sigma(q)&=q,&
\sigma(e_1)&=e_2,&
\sigma(e_2)&=e_3,&
\sigma(e_3)&=e_1,\\
\sigma(e_4)&=e_4,&
\sigma(e_5)&=e_5,&
\sigma(\ell_{12})&=\zeta_3\ell_{23},&
\sigma(\ell_{13})&=\zeta_3\ell_{12},\\
\sigma(\ell_{14})&=-\zeta_3\ell_{24},&
\sigma(\ell_{15})&=-\ell_{25},&
\sigma(\ell_{23})&=\zeta_3\ell_{13},&
\sigma(\ell_{24})&=-\zeta_3\ell_{34},\\
\sigma(\ell_{25})&=-\ell_{35},&
\sigma(\ell_{34})&=\zeta_3\ell_{14},&
\sigma(\ell_{35})&=\ell_{15},&
\sigma(\ell_{45})&=\ell_{45},
\end{align*}
and
\begin{align*}
\tau(q)&=3(\zeta_3^2-1)e_4,&
\tau(e_1)&=\frac{\zeta_3^2-1}{3}\ell_{14},&
\tau(e_2)&=\frac{\zeta_3-\zeta_3^2}{3}\ell_{34},\\
\tau(e_3)&=\frac{\zeta_3-1}{3}\ell_{24},&
\tau(e_4)&=\frac{\zeta_3^2-1}{9}\ell_{45},&
\tau(e_5)&=\frac{\zeta_3-1}{9}q,\\
\tau(\ell_{12})&=\frac{\zeta_3^2-1}{3}\ell_{25},&
\tau(\ell_{13})&=\frac{\zeta_3-1}{3}\ell_{35},&
\tau(\ell_{14})&=(\zeta_3^2-\zeta_3)\ell_{23},\\
\tau(\ell_{15})&=(1-\zeta_3)e_1,&
\tau(\ell_{23})&=\frac{\zeta_3^2-\zeta_3}{3}\ell_{15},&
\tau(\ell_{24})&=(\zeta_3^2-1)\ell_{12},\\
\tau(\ell_{25})&=(\zeta_3-1)e_3,&
\tau(\ell_{34})&=(\zeta_3-1)\ell_{13},&
\tau(\ell_{35})&=(1-\zeta_3)e_2,\\
\tau(\ell_{45})&=3(\zeta_3-1)e_5.
\end{align*}
  One can check that
the ideal $\langle R_1,\ldots,R_{20}\rangle$ given in Section~\ref{sect:rat} is $G$-invariant, and the permutation action of $G$ on the generators agree with that on the exceptional curves listed above. In particular, the $G$-action on $\Cox(S)$ respects the grading by $\Pic(S)$.  Under the following basis of $\rH^0(S,-K_S)$:
\begin{align*}
s_1
&=q e_2e_3\ell_{23}
  -e_1\ell_{12}\ell_{13}\ell_{45},\\
s_2
&=\zeta_3 q e_1e_3\ell_{13}
  -\zeta_3^2 e_2\ell_{12}\ell_{23}\ell_{45},\\
s_3
&=q e_2e_3\ell_{23}
  +\zeta_3 q e_1e_3\ell_{13}
  +2\zeta_3^2 q e_1e_2\ell_{12}
  +\zeta_3^2 e_2\ell_{12}\ell_{23}\ell_{45}
  +e_1\ell_{12}\ell_{13}\ell_{45},\\
s_4
&=\zeta_3^2 q e_2e_3\ell_{23}
  -\zeta_3 q e_1e_3\ell_{13}
  -\zeta_3^2 e_2\ell_{12}\ell_{23}\ell_{45}
  +\zeta_3^2 e_1\ell_{12}\ell_{13}\ell_{45},\\
s_5
&=\zeta_3 q e_2e_3\ell_{23}
  -\zeta_3 q e_1e_3\ell_{13}
  -\zeta_3^2 e_2\ell_{12}\ell_{23}\ell_{45} 
  +\zeta_3 e_1\ell_{12}\ell_{13}\ell_{45},
\end{align*}
the resulting anticanonical map 
$$
\pi:\Spec(\Cox(S))\stackrel{|-K_S|}{\longdashrightarrow} S
$$
is equivariant under the prescribed $G$-action given in Proposition~\ref{prop:c3c4}. 

To show the linearizability of the $G$-action on $\cT$, it suffices to estabish that for the $G$-action on $\Spec(\Cox(S))$. 
The fixed locus $\bP(U)^G$ consists of four points. 
Consider the projection from the affine tangent space of one of these points, given by 
$$
\varphi:\Spec(\Cox(S))\dashrightarrow \bA^8
,\quad (q,e_1,\ldots,e_5,l_{ij})\mapsto (f_1,\ldots,f_8)$$ where, 
\begin{align*}
f_1&=e_2 - \zeta_3e_3 + (\zeta_3 - 1)e_4 + l_{12} - l_{13} + l_{14},\\
f_2&=\zeta_3e_2 - e_3 - (\zeta_3 - 1)e_5 + \zeta_3l_{12} -\zeta_3^2l_{13} + l_{15},\\
f_3&=e_1 -\zeta_3^2e_3 -(\zeta_3 + 2)e_4 + l_{12} - l_{23} + l_{24},\\
f_4&=e_1 - \zeta_3e_3 + (\zeta_3 - 1)e_5 + l_{12} -\zeta_3^2l_{23} + l_{25},\\
f_5&=\zeta_3e_1 -\zeta_3^2e_2 - (2\zeta_3 + 1)e_4 + l_{13} - l_{23} + l_{34},\\
f_6&=\zeta_3e_1 - e_2 - (\zeta_3 - 1)e_5 + l_{13} - \zeta_3l_{23} + l_{35},\\
f_7&=3e_1 - 3\zeta_3e_2 + 3(\zeta_3 - 1)e_5 - (\zeta_3 + 2)l_{14} + (2\zeta_3 + 1)l_{24} 
    + l_{45},\\
f_8&=q - 3(\zeta_3 +2)e_4 + 3l_{12} - 3\zeta_3l_{13}- (2\zeta_3 + 1)l_{25} + (\zeta_3 + 
    2)l_{35}.
\end{align*}
An explicit inverse of $\varphi$, given in \cite{TZdp4-tables},
proves that $\varphi$ is birational.
\end{proof}

\section{Arithmetic applications}
\label{sect:arithm}

J.-L. Colliot-Th\'el\`ene encouraged us to include the following consequences of Theorem~\ref{thm:mainrepeat}, i.e., the rationality of a universal torsor $\cT$ over a quartic del Pezzo surface $S$, provided $\cT(k)\neq \varnothing$:
\begin{enumerate}
    \item The evaluation map from $S(k)/R$ (the set of rational points modulo $R$-equivalence) to $\rH^1(k,T)$ is injective. 
 \item Let $A_{0}(X) \subset \mathrm{CH}_{0}(X)$ be the Chow group of zero-cycles of
degree zero modulo rational equivalence, and $p_0\in S(k)$.
  Then the map  
  \begin{align*}
      S(k)/R \to  A_{0}(S),\quad p\mapsto p-p_0
  \end{align*}
is bijective. 
Its surjectivity was proved in \cite{CTCoray}. 
\item Let $k$ be a finitely generated extension of $\bQ$. Then $S(k)/R$ is finite. This follows from (2) and the finiteness of $A_0(X)$, proved in \cite{CTchow} and \cite{Bloch}. 
\end{enumerate}
These were known when $S$ admits a conic bundle structure, by \cite{CTSkdP4R}.

	\bibliography{stablin}

\begin{thebibliography}{BCTSSD85}

\bibitem[BCTSSD85]{bctspstably}
A.~Beauville, J.-L. Colliot-Th\'el\`ene, J.-J. Sansuc, and P.~Swinnerton-Dyer.
\newblock Vari\'et\'es stablement rationnelles non rationnelles.
\newblock {\em Ann. of Math. (2)}, 121(2):283--318, 1985.

\bibitem[Blo81]{Bloch}
S.~Bloch.
\newblock On the {C}how groups of certain rational surfaces.
\newblock {\em Ann. Sci. \'Ecole Norm. Sup. (4)}, 14(1):41--59, 1981.

\bibitem[BP04]{batycox}
V.~V. Batyrev and O.~N. Popov.
\newblock The {C}ox ring of a del {P}ezzo surface.
\newblock In {\em Arithmetic of higher-dimensional algebraic varieties ({P}alo
  {A}lto, {CA}, 2002)}, volume 226 of {\em Progr. Math.}, pages 85--103.
  Birkh\"auser, 2004.

\bibitem[BW23]{BW}
O.~Benoist and O.~Wittenberg.
\newblock Intermediate {J}acobians and rationality over arbitrary fields.
\newblock {\em Ann. Sci. \'Ec. Norm. Sup\'er. (4)}, 56(4):1029--1084, 2023.

\bibitem[CG72]{CG}
C.~H. Clemens and P.~A. Griffiths.
\newblock The intermediate {J}acobian of the cubic threefold.
\newblock {\em Ann. of Math. (2)}, 95:281--356, 1972.

\bibitem[Che06]{cheltsovquartic}
I.~Cheltsov.
\newblock Nonrational nodal quartic threefolds.
\newblock {\em Pacific J. Math.}, 226(1):65--81, 2006.

\bibitem[CMR04]{CMR}
C.~Ciliberto, M.~Mella, and F.~Russo.
\newblock Varieties with one apparent double point.
\newblock {\em J. Algebraic Geom.}, 13(3):475--512, 2004.

\bibitem[CR06]{projection}
C.~Ciliberto and F.~Russo.
\newblock Varieties with minimal secant degree and linear systems of maximal
  dimension on surfaces.
\newblock {\em Adv. Math.}, 200(1):1--50, 2006.

\bibitem[CTC79]{CTCoray}
J.-L. Colliot-Th\'el\`ene and D.~Coray.
\newblock L'\'equivalence rationnelle sur les points ferm\'es des surfaces
  rationnelles fibr\'ees en coniques.
\newblock {\em Compositio Math.}, 39(3):301--332, 1979.

\bibitem[CTS77]{CS-tori}
J.-L. Colliot-Th\'el\`ene and J.-J. Sansuc.
\newblock La {$R$}-\'equivalence sur les tores.
\newblock {\em Ann. Sci. \'Ecole Norm. Sup. (4)}, 10(2):175--229, 1977.

\bibitem[CTS81]{CTchow}
J.-L. Colliot-Th\'el\`ene and J.-J. Sansuc.
\newblock On the {C}how groups of certain rational surfaces: a sequel to a
  paper of {S}. {B}loch.
\newblock {\em Duke Math. J.}, 48(2):421--447, 1981.

\bibitem[CTS87a]{CTSansucDuke}
J.-L. Colliot-Th\'{e}l\`ene and J.-J. Sansuc.
\newblock La descente sur les vari\'{e}t\'{e}s rationnelles. {II}.
\newblock {\em Duke Math. J.}, 54(2):375--492, 1987.

\bibitem[CTS87b]{CTSkdP4R}
J.-L. Colliot-Th\'el\`ene and A.~N. Skorobogatov.
\newblock {$R$}-equivalence on conic bundles of degree {$4$}.
\newblock {\em Duke Math. J.}, 54(2):671--677, 1987.

\bibitem[Der07]{Coxderenthalhomogeneous}
U.~Derenthal.
\newblock Universal torsors of del {P}ezzo surfaces and homogeneous spaces.
\newblock {\em Adv. Math.}, 213(2):849--864, 2007.

\bibitem[DP19]{derenthal-non}
U.~Derenthal and M.~Pieropan.
\newblock Cox rings over nonclosed fields.
\newblock {\em J. Lond. Math. Soc. (2)}, 99(2):447--476, 2019.

\bibitem[DR15]{DR}
A.~Duncan and Z.~Reichstein.
\newblock Versality of algebraic group actions and rational points on twisted
  varieties.
\newblock {\em J. Algebr. Geom.}, 24(3):499--530, 2015.

\bibitem[EFS25]{cubicmatroid}
P.~Engel, O.d.G. Fortman, and S.~Schreieder.
\newblock Matroids and the integral {H}odge conjecture for abelian varieties,
  2025.
\newblock {\tt arXiv:2507.15704}.

\bibitem[HKT22]{HKT-quad}
B.~Hassett, J.~Koll\'ar, and Yu. Tschinkel.
\newblock Rationality of even-dimensional intersections of two real quadrics.
\newblock {\em Comment. Math. Helv.}, 97(1):183--207, 2022.

\bibitem[HKT23]{HKT-duke}
B.~Hassett, A.~Kresch, and Yu. Tschinkel.
\newblock Stable rationality in smooth families of threefolds.
\newblock {\em Duke Math. J.}, 172(6):1145--1172, 2023.

\bibitem[HT17a]{HT-1}
B.~Hassett and Yu. Tschinkel.
\newblock Stable rationality of quartic del {P}ezzo surfaces, 2017.
\newblock unpublished note.

\bibitem[HT17b]{HT-2}
B.~Hassett and Yu. Tschinkel.
\newblock Stably rational cubic threefolds, 2017.
\newblock unpublished note.

\bibitem[HT21]{HT-quad}
B.~Hassett and Yu. Tschinkel.
\newblock Rationality of complete intersections of two quadrics over nonclosed
  fields.
\newblock {\em Enseign. Math.}, 67(1-2):1--44, 2021.
\newblock With an appendix by Jean-Louis Colliot-Th\'{e}l\`ene.

\bibitem[HT23]{HT-torsor}
B.~Hassett and Yu. Tschinkel.
\newblock Torsors and stable equivariant birational geometry.
\newblock {\em Nagoya Math. J.}, 250:275--297, 2023.

\bibitem[KiST89]{KSS}
B.~Kunyavski\u~i, A.~Skorobogatov, and M.~Tsfasman.
\newblock del {P}ezzo surfaces of degree four.
\newblock {\em M\'em. Soc. Math. France (N.S.)}, (37):113, 1989.

\bibitem[KT25]{KT-uni}
A.~Kresch and Yu. Tschinkel.
\newblock Equivariant unirationality of toric varieties, 2025.
\newblock {\tt arXiv:2506.07152}.

\bibitem[KT26a]{KT-survey}
A.~Kresch and Yu. Tschinkel.
\newblock Invariants in equivariant birational geometry, 2026.
\newblock {\tt arXiv:2602.23998}.

\bibitem[KT26b]{KT-notions}
A.~Kresch and Yu. Tschinkel.
\newblock Linearizability notions in equivariant birational geometry, 2026.
\newblock {\tt arXiv:2606.10965}.

\bibitem[Man66]{Maninperfect}
Yu. \~I. Manin.
\newblock Rational surfaces over perfect fields.
\newblock {\em Inst. Hautes \'Etudes Sci. Publ. Math.}, (30):55--113, 1966.

\bibitem[Man86]{manin-book}
Yu. \~I. Manin.
\newblock {\em Cubic forms}, volume~4 of {\em North-Holland Mathematical
  Library}.
\newblock North-Holland Publishing Co., Amsterdam, second edition, 1986.

\bibitem[Pro15]{Pro-2}
Yu. Prokhorov.
\newblock On stable conjugacy of finite subgroups of the plane {C}remona group,
  {II}.
\newblock {\em Michigan Math. J.}, 64(2):293--318, 2015.

\bibitem[SB04]{SBstab}
N.~I. Shepherd-Barron.
\newblock Stably rational irrational varieties.
\newblock In {\em The {F}ano {C}onference}, pages 693--700. Univ. Torino,
  Turin, 2004.

\bibitem[SS91]{Salberger}
P.~Salberger and A.~N. Skorobogatov.
\newblock Weak approximation for surfaces defined by two quadratic forms.
\newblock {\em Duke Math. J.}, 63(2):517--536, 1991.

\bibitem[SS07]{SS}
V.~Serganova and A.~Skorobogatov.
\newblock Del {P}ezzo surfaces and representation theory.
\newblock {\em Algebra Number Theory}, 1(4):393--419, 2007.

\bibitem[TY20]{TY}
Yu. Tschinkel and K.~Yang.
\newblock Potentially stably rational del {P}ezzo surfaces over nonclosed
  fields.
\newblock In {\em Combinatorial and additive number theory. {III}}, volume 297
  of {\em Springer Proc. Math. Stat.}, pages 227--233. Springer, Cham, 2020.

\bibitem[TZ26]{TZdp4-tables}
Yu. Tschinkel and Zh. Zhang.
\newblock Supporting materials, 2026.
\newblock available at: {\tt https://zhijiazhangz.github.io/dP4/}.

\bibitem[Voi15]{Voisinquartic}
C.~Voisin.
\newblock Unirational threefolds with no universal codimension {$2$} cycle.
\newblock {\em Invent. Math.}, 201(1):207--237, 2015.

\bibitem[Voi17]{Voisincubic}
C.~Voisin.
\newblock On the universal {$\rm CH_0$} group of cubic hypersurfaces.
\newblock {\em J. Eur. Math. Soc. (JEMS)}, 19(6):1619--1653, 2017.

\end{thebibliography}
	\bibliographystyle{alpha}
	
\end{document}